\documentclass[11pt,reqno]{amsart}

\usepackage{filecontents}
\usepackage{microtype}
\usepackage[english]{babel}
\usepackage[T1]{fontenc}
\usepackage[utf8]{inputenc}
\usepackage[T1]{fontenc}
\usepackage{lmodern}
\usepackage{latexsym}
\usepackage{verbatim}
\usepackage{hyperref}
\usepackage{amsthm}
\usepackage{amsmath}
\usepackage{amsfonts}
\usepackage[italian]{varioref}
\usepackage{imakeidx}
\usepackage{amssymb}
\usepackage{xcolor}
\usepackage{esint}
\usepackage{accents}
\usepackage{enumitem}
\usepackage{caption} 

\usepackage{soul}

\usepackage{tikz}
\usepackage{tikz,tikz-3dplot}
\usepackage{pgfplots}
\pgfplotsset{compat=1.16}
\tikzset{
    cross/.pic = {
    \draw[rotate = 45] (-#1,0) -- (#1,0);
    \draw[rotate = 45] (0,-#1) -- (0, #1);
    }
}
\usetikzlibrary{calc, angles}
\usetikzlibrary{arrows}
\usetikzlibrary{positioning}
\usetikzlibrary{intersections}
\usepackage{pgfplots}
\pgfplotsset{compat=newest}
\usepgfplotslibrary{fillbetween}
\usepackage{pgfplots}
\pgfplotsset{compat=newest}
\usepgfplotslibrary{fillbetween}
\usetikzlibrary{patterns}
\usepackage{mathtools}
\mathtoolsset{showonlyrefs}
\usepackage{amsmath}
\allowdisplaybreaks
 
	\definecolor{ao(english)}{rgb}{0.0, 0.5, 0.0}

\usepackage{lipsum}
\usepackage{curve2e}
\definecolor{gray}{gray}{0.4}

\usepackage{amsmath}
\usepackage{amsfonts}
\usepackage{amssymb}
\usepackage{graphicx}
\usepackage{hyperref} 
\usepackage{mathrsfs} 
\usepackage{relsize} 

\theoremstyle{plain}
\newtheorem{theorem}{Theorem}[section]

\newtheorem{corollary}[theorem]{Corollary}
\newtheorem{definition}[theorem]{Definition}
\newtheorem{lemma}[theorem]{Lemma}
\newtheorem*{problem*}{Problem}

\newtheorem{proposition}[theorem]{Proposition}

\newtheorem*{theorem*}{Theorem}

\theoremstyle{definition}

\newtheorem{remark}[theorem]{Remark}

\def\Item$#1${\item $\displaystyle#1$
   \hfill\refstepcounter{equation}(\theequation)}
   
\theoremstyle{plain} 

\theoremstyle{definition}

\theoremstyle{remark} 
\numberwithin{equation}{section}
\usepackage[makeroom]{cancel}

\usepackage{bm} 
\usepackage{graphicx}
\makeindex
\usepackage{mathtools}
\usepackage{slashed}
\date{}

\usepackage{tikz,tikz-3dplot}
\tdplotsetmaincoords{80}{45}
\tdplotsetrotatedcoords{-90}{180}{-90}
\usetikzlibrary{arrows, automata, intersections}

\newcommand{\rr}{\mathbb{R}}

\newcommand{\sss}{\mathbb{S}}
\newcommand{\e}{\epsilon}

\newcommand{\dvol}{\ \textnormal{dv}}

\newcommand{\inn}{\textnormal{in}\ }
\newcommand{\onn}{\textnormal{on}\ }
\newcommand{\andd}{\textnormal{and}}

\newcommand{\dint}[1]{\textnormal{d}#1}

\newcommand{\tr}[1]{\textnormal{tr}(#1)}

\counterwithout{equation}{subsubsection}
\numberwithin{equation}{section}

\tikzstyle{mybox} = [draw=black, very thick, rectangle, rounded corners, inner ysep=5pt, inner xsep=5pt]

\title[Rigidity for two-phase overdetermined problems in $\sss^n$]{Rigidity for two-phase\\ overdetermined problems in $\sss^n$}
\author{Andrea Bisterzo and Shigeru Sakaguchi}

\address{Andrea Bisterzo
  \newline \indent Centro di Ricerca Matematica Ennio De Giorgi
  \newline \indent
Scuola Normale Superiore
\newline\indent Piazza dei Cavalieri 3, 56126, Pisa, Italy.}
\email{andrea.bisterzo@sns.it}

\address{Shigeru Sakaguchi
  \newline \indent Admissions Center
  \newline \indent
Tohoku University
\newline\indent Sendai, 980-8576, Japan.}
\email{sigersak@tohoku.ac.jp}

\begin{document}


\begin{abstract}
In this paper, we study rigidity phenomena associated with a class of overdetermined two-phase problems on the $n$-dimensional sphere $\mathbb{S}^n$. Specifically, we analyze the solutions of coupled elliptic equations defined on a domain $\Omega \subset \mathbb{S}^n$ and its complement, subject to Dirichlet boundary conditions on the interface $\partial\Omega$ and a compatibility condition on the gradient. 

We utilize two different methods. The method of moving planes works when $\partial\Omega$ is contained inside an open hemisphere. The other method works when $\Omega$ is simply connected in $\mathbb S^2$. Both lead to the same conclusion. Namely,  the existence of a solution to such overdetermined problems necessarily implies that $\Omega$ is a geodesic ball. 
In the latter, we prove this rigidity property for three distinct scenarios: a baseline problem with piecewise constant source terms, which is the two-phase version of the torsion problem; a generalization including a class of positive and regular nonlinearities; and finally an eigenvalue problem involving the first Dirichlet eigenvalue of the respective phases.
\end{abstract}

\maketitle
\tableofcontents

\section{Introduction}\label{introduction}
Let $\sss^n$ be the $n$-dimensional sphere with $n \ge 2$ and let $\Omega\subset \sss^n$  be a domain of class $C^2$ where  $\sss^n\setminus \overline{\Omega} \not=\emptyset$. Set $\Omega^+:=\Omega$ and $\Omega^-:=\sss^n\setminus \overline{\Omega}$. Denote by $\mathcal{X}_{\Omega^+}$ and $\mathcal{X}_{\Omega^-}$ the characteristic functions of the sets $\Omega^+$ and $\Omega^-$, respectively.
Set  $\sigma:= \sigma^+\mathcal{X}_{\Omega^+} + \sigma^-\mathcal{X}_{\Omega^-}$, for two  distinct positive real numbers $\sigma^+, \sigma^-$. We consider the unique solution $v=v(x,t)$ of the Cauchy problem for the  diffusion equation in the two-phase heat conductor $\sss^n$:
\begin{equation}\label{Eq_heat-Cauchy}
v_t=\mbox{ div}(\sigma\nabla v)\ \mbox{ in } \sss^n \times (0, +\infty)\ \mbox{ and } v =\mathcal{X}_{\Omega^+} \mbox{ on } \sss^n \times\{0\}.
\end{equation}
The maximum principle gives
\begin{align}\label{between 0 and 1}
0 < v < 1\ \mbox{ in }\ \sss^n\times (0,+\infty).
\end{align}
Denote by $v^+$ and $v^-$ the restrictions of $v$ to $\overline{\Omega^+}$ and $\overline{\Omega^-}$  respectively.
Then we have
\begin{align}\label{transmission condition for heat}
v^+=v^-\mbox{ and } \sigma^+\partial_\nu v^+=\sigma^-\partial_\nu v^-\ \mbox{ on }\partial\Omega\times(0,+\infty),
\end{align}
where $\nu$ denotes the outward unit normal vector to $\partial\Omega$ and \eqref{transmission condition for heat} expresses the transmission conditions on  $\partial\Omega$.
The interface $\partial\Omega$ is said to be {\it stationary isothermic} if there exists a function $a=a(t)$ for $t > 0$ satisfying
\begin{align}\label{stationary isothermic}
v(x,t) = a(t) \mbox{ for every } (x,t) \in \partial\Omega \times (0, +\infty).
\end{align}
Let $v$ be the solution of \eqref{Eq_heat-Cauchy} satisfying \eqref{stationary isothermic}. Set 
\begin{align}\label{definition of gamma}
\gamma=\frac {|\Omega^+|}{|\Omega^-|},\  f=\mathcal{X}_{\Omega^+}-\gamma\mathcal{X}_{\Omega^-} \mbox{ and }  w=(1+\gamma)\left(v-\frac {\gamma}{1+\gamma}\right).
\end{align}
Note that
\begin{align}\label{zero mean}
\int_{\sss^n} f\dvol^{\sss^n}=0
\end{align}
and $w$ is the unique solution of \eqref{Eq_heat-Cauchy} where the initial data is replaced with $f$. Then $w$ decays to $0$ as $t \to +\infty$ exponentially in $t$ and we may define the function $W=W(x)$ and the number $b$ by
\begin{align}\label{time average}
 W(x)= \int_0^\infty w(x,t)\ \dint{t}\ \mbox{  for }x \in \sss^n
\end{align}
and
\begin{align} 
\ b=\int_0^\infty\left\{(1+\gamma)\left(a(t)-\frac {\gamma}{1+\gamma}\right)\right\}\ \dint{t}.
 \end{align}
 Denote by $W^+$ and $W^-$ the restrictions of $W$ to $\overline{\Omega^+}$ and $\overline{\Omega^-}$  respectively.
Then we observe that 
\begin{align}\label{elliptic transmission problem}
-\mbox{div}(\sigma\nabla W) = f\ \mbox{ in } \sss^n,
\end{align}
and by \eqref{transmission condition for heat} 
\begin{align}\label{transmission condition for elliptic}
W^+=W^-=b\mbox{ and } \sigma^+\partial_\nu W^+=\sigma^-\partial_\nu W^-\ \mbox{ on }\partial\Omega.
\end{align}
Eventually, the function $u=u(x)$ given by  $u=\sigma(W-b)$  satisfies the following overdetermined problem
\begin{align}\label{Eq-main}
-\Delta u = \mathcal{X}_{\Omega^+}-\gamma\mathcal{X}_{\Omega^-}\ \mbox{ in }\sss^n\ \mbox{ and }\ u=0\mbox{ on } \partial\Omega.
\end{align}
Namely, the existence of $u$ solving \eqref{Eq-main} is a necessary condition for \eqref{stationary isothermic}.
Moreover, denote by $u^+$ and $u^-$ the restrictions of $u$ to $\overline{\Omega^+}$ and $\overline{\Omega^-}$  respectively. Then problem \eqref{Eq-main} is equivalent to the decoupled problems 
\begin{align}\label{Eq-decoupled problems for W}
\begin{cases}
-\Delta u^+=1 & \inn \Omega^+\\
u^+=0 & \onn \partial \Omega^+
\end{cases} \quad \andd \quad
\begin{cases}
-\Delta u^- = -\gamma & \inn \Omega^-\\
u^- =0 & \onn \partial \Omega^-
\end{cases}
\end{align}
together with the compatibility condition
\begin{align}\label{Eq-compatibility}
\nabla u^+ = \nabla u^- \quad \onn \partial \Omega\ (=\partial\Omega^+=\partial\Omega^-).
\end{align}
Note in particular that $u\in C^1(\sss^n)$ and by the maximum principle
\begin{equation}\label{positivity of upm}
  u^+ > 0 \mbox{  in } \Omega^+\ \mbox{ and  }\ u^- <  0 \mbox{ in }\Omega^-.
  \end{equation}
Thus the problem \eqref{Eq-main} resembles the classical overdetermined torsion problem
\begin{align}\label{Eq_serrin}
-\Delta u=1 \mbox{ in } \Omega,\ u=0 \mbox{ on } \partial \Omega \mbox{ and } \partial_\nu u =  \mbox{constant  on }\ \partial\Omega.
\end{align}



The main reason behind the present work is exactly the problem \eqref{Eq-main}. Rigidity phenomena for overdetermined problems associated with the
classical torsion equation have been extensively studied in geometric
analysis; see, e.g., \cite{Se71,We71,NT18} and the references therein. One of the most celebrated results, proved by J. Serrin in \cite{Se71}, states that balls are the only bounded and smooth Euclidean domains supporting a solution to \eqref{Eq_serrin}. 
Serrin's proof makes use of the method of moving planes, a very delicate technique introduced by A. D. Alexandrov in \cite{Al56} to prove that the only embedded hypersurfaces of $\rr^n$ with constant mean curvature are spheres.

Two are the key ingredients for letting the method of moving planes work. The first is to have an abundance of isometries acting on the underlying manifold, making it possible to:
	\begin{enumerate}
		\item reflect with respect to totally geodesic hypersurfaces normal to any fixed direction for which the domain is supposed to be symmetric;
		\item move such hypersurfaces toward the domain.
	\end{enumerate}
The second, the \textit{Step zero}, is to have enough space to be able to place totally geodesic hypersurfaces (again, oriented in any direction with respect to which we want to prove the symmetry of the domain) in the complement of the domain. 

Clearly, such requirements do not allow the method of moving planes to be used in arbitrary Riemannian manifolds. Some of the settings where the method, under suitable assumptions, works well are the space forms: $\rr^n$, the hyperbolic space $\mathbb{H}^n$, and $\sss^n$. Indeed, in the very interesting works \cite{M91}, by R. Molzon, and \cite{KP98}, by S. Kumaresan and J. Prajapat, the authors managed to extend both the method and Serrin's result to the case of domains contained in $\mathbb{H}^n$ or in an open hemisphere. While for $\mathbb H^n$  the rigidity theorem of Serrin can be translated \textit{ad litteram}, in $\sss^n$  things are more complicated due to the compactness of the ambient manifold, and hence the impossibility for the complement of large domains to contain great circles (i.e., totally geodesic hypersurfaces). For the same reason, the method of moving planes is not applicable to the study of the two-phase problem \eqref{Eq-main} for a general domain $\Omega\subset \sss^n$.

The problem of finding a \textit{genuine} spherical counterpart of Serrin's result has attracted considerable attention; see, among others, \cite{FM15, CV17,QX17,CV19} and the references therein. While on the one hand the obstacle in proving the rigidity theorem in $\sss^n$ seemed to be purely technical, i.e., a defect of the method of moving planes, on the other hand domains supporting a solution to \eqref{Eq_serrin} (\textit{Serrin domains}) that are different from geodesic balls were known (for instance, every \textit{isoparametric domain}), making it necessary to reformulate the problem on the sphere. In this direction, we must mention some very interesting results in \cite{CV17,CV19}: here the authors investigated some parallel problems, by adding extra assumptions on the domain or considering different equations, possibly taking into account the curvature of the manifold, in order to find one that imposes enough rigidity on the domain, hence allowing them to perfectly replicate the Euclidean result.

Regarding Serrin's problem on the sphere, in the celebrated work \cite{FMW} by M. M. Fall, I. A. Minlend, and T. Weth, the authors managed to implicitly construct, using the Crandall-Rabinowitz bifurcation theorem, a Serrin's domain obtained as a perturbation of a tubular neighbourhood of the equator. This perturbation is nontrivial, hence providing an annulus whose boundary components are diffeomorphic but not isometric to geodesic spheres. Using a similar approach, in \cite{BS25} the authors of the present paper constructed another Serrin domain in $\sss^3$, obtained as a non-trivial perturbation of an isoparametric domain with a connected toric boundary, hence showing that even under the assumption of a connected boundary, neither the class of isoparametric domains is large enough to contain all the Serrin domains of $\sss^3$.

However, in the special case of $\sss^2$ things appear more rigid and there is no room for wild phenomena among simply connected domains. Indeed, as proved by J. M. Espinar and L. Mazet in \cite{EM19}, a Serrin-like theorem on the whole sphere can be recovered in this case, showing that the dimensional threshold of two displays a deep discrepancy in the rigidity of Serrin's overdetermined problem.

The approach used by Espinar and Mazet is completely different from that adopted by Serrin. In \cite{EM19} the authors, inspired by a work of J. A. Galvez and P. Mira \cite{GM20}, first define a traceless symmetric bilinear form which measures how far the solution to \eqref{Eq_serrin} is from being radial (and hence how far the domain is from being a ball), and then, using a topological argument relying on the Poincar\'e - Hopf index theorem, conclude by proving that the only simply connected Serrin's domains in $\sss^2$ are geodesic balls. We stress that Espinar and Mazet managed to consider a wide class of semilinear PDEs, not just the torsion equation.

Some two-phase overdetermined problems in $\mathbb R^n$, which are similar to \eqref{Eq-main},  have been dealt with in \cite{KS23, DNS25}. The authors applied directly the method of moving planes to their problems to prove the rigidity theorems, where the point is to apply the method to both the interior $\Omega$ and the exterior $\mathbb R^n\setminus \overline{\Omega}$ at the same time. In their procedure, the supposition that $\Omega$ is not symmetric will lead to a contradiction to the transmission conditions on the interface $\partial\Omega$ with the aid of Hopf's boundary point lemma and Serrin's corner point lemma  (see \cite[Lemma S, p. 214]{GNN79} or \cite[Serrin's Corner Lemma, p. 393]{R97}). 

\medskip

The first result of the present paper is to show that the method of moving planes still works  to show rigidity for problem \eqref{Eq-main} in $\sss^n$ provided that $\partial\Omega$ is contained in a hemisphere.
\begin{theorem}\label{t_main_1}
Let $\Omega\subset\sss^n$ be a domain whose boundary $\partial\Omega$ is of class $C^{2,\alpha}$ for some $0 < \alpha < 1$. Assume that $\partial\Omega$ is contained in an open hemisphere and there exists a solution $u\in C^1(\sss^n)$  to  problem \eqref{Eq-main}.
Then  $\Omega$ is a geodesic ball.
\end{theorem}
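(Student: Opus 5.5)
We prove Theorem \ref{t_main_1} by the method of moving planes, run simultaneously in $\Omega^+$ and $\Omega^-$ as in \cite{KS23, DNS25}, with hyperplanes replaced by totally geodesic great spheres as in \cite{KP98}.

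\textbf{Setup.} Fix a unit direction $e$ tangent at the north pole $N$ of an open hemisphere $H$ containing $\partial\Omega$. Consider the one-parameter family of great spheres $\Pi_\lambda$, $\lambda\in(-\pi/2,\pi/2)$, all containing the $(n-2)$-sphere $\partial H\cap e^\perp$. Let $\Pi_\lambda$ meet the meridian through $N$ in direction $e$ at signed distance $\lambda$ from $N$. Reflection across $\Pi_\lambda$ is an isometry $R_\lambda$ of $\sss^n$; it preserves $\Delta$ and maps $H$ to $H$ for $\lambda$ in this range. Since $\partial\Omega$ is compact in $H$, we may assume (replacing $\Omega$ by its complement if needed, which preserves \eqref{Eq-main} up to rescaling $u$ by $-1/\gamma$) that $\Omega^+$ is the component side contained in $H$ and $\Omega^-\supset \sss^n\setminus H$. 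Call $\Sigma_\lambda$ the cap of $\Omega$ beyond $\Pi_\lambda$. For $\lambda$ close to $\pi/2$, $\Pi_\lambda\cap\overline\Omega=\emptyset$; this is the Step zero. We then decrease $\lambda$ until the first critical position $\lambda_*$ where either $R_\lambda(\Sigma_\lambda)$ becomes internally tangent to $\partial\Omega$, or $\Pi_\lambda$ becomes orthogonal to $\partial\Omega$ at some point.

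\textbf{Comparison functions.} For $\lambda>\lambda_*$ set $u_\lambda=u\circ R_\lambda$ and $\omega_\lambda=u_\lambda-u$ on the half $D_\lambda$ of $\sss^n$ beyond $\Pi_\lambda$ that contains the cap. We decompose $D_\lambda$ into three pieces.
\begin{enumerate}
\item On $R_\lambda(\Sigma_\lambda)$, both $u$ and $u_\lambda$ solve $-\Delta=1$, so $\omega_\lambda$ is harmonic. On $\partial$ we have $\omega_\lambda=0$ on $\Pi_\lambda$ and $\omega_\lambda=u_\lambda\ge0$ on the reflected boundary. The maximum principle gives $\omega_\lambda>0$.
\item Where $x\in\Omega^-$ and $R_\lambda x\in\Omega^+$, we have $u_\lambda>0>u$, so $\omega_\lambda>0$ trivially.
\item Where both $x$ and $R_\lambda x$ lie in $\Omega^-$, $-\Delta\omega_\lambda=0$ as well.
\end{enumerate}
A continuity argument then shows $\omega_\lambda\ge0$ in all of $D_\lambda$ for $\lambda\ge\lambda_*$. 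The maximum principle must hold on a possibly large region of $\Omega^-$; here compactness of $D_\lambda$ and the boundary values ($\omega_\lambda=0$ on $\Pi_\lambda$, $\omega_\lambda\ge0$ on $\partial\Omega$ and on the reflected interface) suffice. Note that on $D_\lambda$ there is no zeroth-order term, since the sources are constants.

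\textbf{Reaching a contradiction.} At $\lambda_*$ we argue as in \cite{KS23}. Suppose $R_{\lambda_*}(\Sigma)\ne\Omega\cap R_{\lambda_*}(D)$. Then $\omega_{\lambda_*}>0$ in the interior by the strong maximum principle, since it is nonnegative, not identically zero, and harmonic piecewise. The compatibility condition \eqref{Eq-compatibility} forces $\nabla u$ to be continuous across $\partial\Omega$, so $u\in C^1$.
\begin{itemize}
\item In the internal tangency case at $P$, we have $\omega_{\lambda_*}(P)=0$. Hopf's lemma gives $\partial_\nu\omega_{\lambda_*}(P)\ne0$, contradicting $\nabla u_{\lambda_*}(P)=\nabla u(P)$, which follows from $C^1$-matching of $u$ along the common tangent interface.
\item In the orthogonality case, Serrin's corner lemma applies, using the $C^{2,\alpha}$ regularity to get second-order matching of $u$ at the corner. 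The two-phase point is that second derivatives may jump across $\partial\Omega$. So one applies the corner lemma on the side where both $x$ and $R x$ lie in the same phase, where $u$ is $C^2$ up to the boundary.
\end{itemize}

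\textbf{Conclusion.} Hence $\Omega$ is symmetric with respect to $\Pi_{\lambda_*}$ for every direction $e$. Since the hemisphere condition persists for nearby hemispheres, all these symmetry great spheres pass through a common point. So $\Omega$ is rotationally symmetric, and connectedness of $\Omega$ together with the moving-plane monotonicity gives that $\Omega$ is a geodesic ball.

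\textbf{Main obstacle.} We expect the hardest step to be the corner case: verifying the second-order matching at the corner point and the sign structure of the differential inequality across the interface. If the corner lemma cannot be applied directly, we would first try treating $u^+$ and $u^-$ separately, using $\partial_\nu$ matching and the equations to compute the second normal derivatives on both sides.
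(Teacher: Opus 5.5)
Your set-up is sound: moving great spheres inside the hemisphere, $\omega_\lambda=u\circ R_\lambda-u$ on the cap side $D_\lambda$, and the three-piece decomposition giving $\omega_\lambda\ge 0$. However, the orthogonal case, which is the crux, is not actually argued, and the route you sketch cannot close it. Work in stereographic coordinates with $Q=0$, $\Pi_{\lambda_*}=\{x_1=0\}$ and $\partial\Omega=\{x_n=\psi(\hat x)\}$. On a piece where $x$ and $R_{\lambda_*}x$ lie in the same phase, $\omega_{\lambda_*}$ is odd in $x_1$ and has vanishing gradient at $Q$. Its only second derivative at $Q$ not forced to vanish is $\partial^2_{x_1x_n}\omega_{\lambda_*}(Q)=-2\,\partial^2_{x_1x_n}u^\pm(Q)$. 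The derivatives $\partial^2_{x_1x_j}u^\pm(Q)$, $2\le j\le n-1$, vanish after differentiating $u^\pm=0$ on $\partial\Omega$ twice and using $\partial^2_{x_1x_j}\psi(0)=0$, which holds because $\lambda_*$ is the \emph{first} orthogonal position. In Serrin's one-phase problem, $\partial^2_{x_1x_n}u(Q)=0$ comes from differentiating the Neumann condition in $x_1$. Here there is no Neumann condition, so the corner lemma on a single phase only yields a strict sign for $\partial^2_{x_1x_n}u^+(Q)$ (or for $\partial^2_{x_1x_n}u^-(Q)$), which contradicts nothing. Your fallback of computing second normal derivatives does not help either: they differ by $1+\gamma$ across $\partial\Omega$ and do not enter, since $\partial^2_{x_n}\omega_{\lambda_*}(Q)=0$ by oddness. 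The missing step is to apply the corner lemma on \emph{both} pieces meeting at $Q$: the one where $x,R_{\lambda_*}x\in\Omega^+$ and the one where $x,R_{\lambda_*}x\in\Omega^-$. This gives strict inequalities of opposite sign for $\partial^2_{x_1x_n}u^+(Q)$ and $\partial^2_{x_1x_n}u^-(Q)$. These contradict the identity $\partial^2_{x_1x_n}u^+(Q)=\partial^2_{x_1x_n}u^-(Q)$, obtained by differentiating the normal component of \eqref{Eq-compatibility} in the tangential direction $x_1$. This is what the paper does.

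Alternatively, and closer to your global $\omega_\lambda$, you can avoid the corner lemma entirely. Since $u\in C^1(\sss^n)$ solves $-\Delta u=F:=\chi_{\Omega^+}-\gamma\chi_{\Omega^-}$ on all of $\sss^n$, it lies in $W^{2,p}$ for every $p<\infty$. The inclusion $R_{\lambda_*}(\Omega\cap D_{\lambda_*})\subset\Omega$ then gives $-\Delta\omega_{\lambda_*}=F\circ R_{\lambda_*}-F\ge0$ a.e.\ in $D_{\lambda_*}$: it equals $1+\gamma$ on your piece (2) and $0$ on the others. So $\omega_{\lambda_*}$ is a supersolution on the whole hemisphere, vanishing on $\Pi_{\lambda_*}$. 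Either $\omega_{\lambda_*}\equiv0$, which is the desired symmetry, or $\omega_{\lambda_*}>0$ in $D_{\lambda_*}$. In the latter case Hopf's lemma (valid for $W^{2,n}$ supersolutions via the usual barrier) gives $\partial_n\omega_{\lambda_*}>0$ at every point of the smooth boundary $\Pi_{\lambda_*}$. Tangency produces an interior zero, which is impossible. At an orthogonal point $Q$, $\partial_n\omega_{\lambda_*}(Q)=-2\,\partial_n u(Q)=0$, because the normal $n$ to $\Pi_{\lambda_*}$ is tangent to $\partial\Omega$ there. This also repairs two weak points of your write-up. First, piecewise harmonicity alone does not justify ``$\omega_{\lambda_*}>0$ in the interior''. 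Second, in the tangency case the equality $\nabla u_{\lambda_*}=\nabla u$ at the touching point does not come from \eqref{Eq-compatibility}, which compares $\nabla u^+$ and $\nabla u^-$ at the same point rather than at a point and its mirror image. It comes from $\omega_{\lambda_*}\ge0$ being $C^1$ with an interior zero. Some minor points: piece (1) should be the cap itself, not $R_\lambda(\Sigma_\lambda)$; $R_\lambda$ does not preserve $H$ unless $\lambda=0$; and the common point of the symmetry spheres is best obtained from the barycenter of $\Omega$ in $\rr^{n+1}$, which is nonzero and fixed by every reflection.
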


\noindent
The method of moving planes also works to show rigidity for an overdetermined problem  in the hyperbolic space $\mathbb H^n$ with $n \ge 2$.
\begin{theorem}\label{t_main_2}
Let $\Omega\subset\mathbb H^n$ be a bounded domain whose boundary $\partial\Omega$ is of class $C^{2,\alpha}$ for some $0 < \alpha < 1$. Assume that there exists a solution $u\in C^1(\mathbb H^n)$  to the overdetermined problem 
\begin{align}\label{overdetermined for hyperbolic space}
\begin{cases}
-\Delta u=\mathcal X_{\Omega} & in\ \mathbb H^n,\\
 u\equiv 0 & on\ \partial \Omega,\\
 u(x) \to -c &\mbox{as } d^{\mathbb H^n}\!(x, o) \to \infty,
 \end{cases}
\end{align}
where $c$ is a  positive constant  and $d^{\mathbb H^n}\!(x, o)$ denotes the distance of $x$ from a fixed point $o$ in $\mathbb H^n$. Then $\Omega$ is a geodesic ball.
\end{theorem}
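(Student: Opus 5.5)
We argue by the method of moving planes, applied simultaneously to $\Omega$ and to its exterior $\mathbb H^n\setminus\overline{\Omega}$, in the spirit of \cite{KS23, DNS25}. We use the hyperboloid (or Poincaré ball) model, in which totally geodesic hyperplanes orthogonal to a fixed geodesic $\ell$ through $o$ form a one-parameter family $\{T_\lambda\}$ moved by the hyperbolic translations along $\ell$. Each $T_\lambda$ has an isometric reflection $R_\lambda$.

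\textbf{Preliminary facts.} Write $u^+=u|_{\overline\Omega}$ and $u^-=u|_{\mathbb H^n\setminus\Omega}$.
\begin{enumerate}
\item By the maximum principle, $u^+>0$ in $\Omega$, since $-\Delta u^+=1$ and $u^+=0$ on $\partial\Omega$.
\item The function $u^-$ is harmonic outside $\overline\Omega$, vanishes on $\partial\Omega$ and tends to $-c<0$ at infinity. Hence $-c<u^-<0$ in the exterior.
\item Hopf's lemma gives $\partial_\nu u^+<0$ and $\partial_\nu u^-<0$ on $\partial\Omega$. Since $u\in C^1$, the compatibility condition says $\nabla u^+=\nabla u^-$ on $\partial\Omega$.
\end{enumerate}

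\textbf{Moving the plane.} Since $\Omega$ is bounded, for $\lambda$ large $T_\lambda$ misses $\overline\Omega$; this is Step zero, which is unproblematic in $\mathbb H^n$. Decrease $\lambda$ until the first critical position $\lambda_*$. This is the usual Serrin stopping time: either the reflected cap $\Sigma'_\lambda=R_\lambda(\Sigma_\lambda)$, with $\Sigma_\lambda=\Omega\cap\{\text{side beyond }T_\lambda\}$, becomes internally tangent to $\partial\Omega$, or $T_\lambda$ becomes orthogonal to $\partial\Omega$. For $\lambda\ge\lambda_*$ we compare two functions:
\begin{itemize}
\item inside $\Sigma'_\lambda$, we compare $u^+$ with $u^+\circ R_\lambda$, and both solve $-\Delta=1$;
\item on the exterior half-region $H_\lambda\setminus R_\lambda(\overline\Omega)$, where $H_\lambda$ is the half-space not containing the cap, we compare $u^-$ with $u^-\circ R_\lambda$, both harmonic there.
\end{itemize}
The boundary data are ordered. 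On $\partial\Omega$ inside the reflected region, $u^+\circ R_\lambda=0<u^+$. On $R_\lambda(\partial\Omega)$ one has $u^-<0=u^-\circ R_\lambda$ wherever $u^-$ is defined. On $T_\lambda$ the two functions are equal. At infinity, both functions tend to $-c$. Since $u$ solves $-\Delta u=\mathcal X_\Omega\ge0$ globally, it is convenient to compare $u$ and $u_\lambda=u\circ R_\lambda$ directly on the whole half-space $H_\lambda$. There, in the set where $R_\lambda$ maps into $\Omega$ but the point itself is not in $\Omega$, one has $-\Delta u_\lambda=1\ge 0=-\Delta u$. Therefore $\Delta(u-u_\lambda)\ge0$ off $\Omega$, while $\Delta(u-u_\lambda)=0$ where both points lie in $\Omega$, since reflected caps lie inside $\Omega$ for $\lambda\ge\lambda_*$. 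So $u-u_\lambda$ is subharmonic on $H_\lambda$, vanishes on $T_\lambda$ and tends to $0$ at infinity. The maximum principle on an unbounded domain, handled via the decay at infinity, gives $u\le u_\lambda$ in $H_\lambda$, with strict inequality unless $u\equiv u_\lambda$. The sign conventions must be set up so that $u-u_\lambda$ is indeed the subharmonic one; this fixes which side is $H_\lambda$.

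\textbf{Contradiction at $\lambda_*$.} If $u\not\equiv u_{\lambda_*}$, the strong maximum principle gives strict inequality in the interior.
\begin{itemize}
\item In the tangency case at a point $p\in\partial\Omega$, both $u$ and $u_{\lambda_*}$ vanish at $p$ and $u-u_{\lambda_*}$ attains an extremum there. Hopf's lemma applied to the exterior region, or to the interior region, then gives $\partial_\nu(u-u_{\lambda_*})\ne0$ at $p$. But $\nabla u$ is continuous across $\partial\Omega$ by the compatibility condition, and $\nabla u_{\lambda_*}(p)=\nabla u(R p)$ reflected, with the tangency forcing $\nabla u(p)=\nabla u_{\lambda_*}(p)$ as normals agree. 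This is a contradiction.
\item In the orthogonality case at a point $q\in T_{\lambda_*}\cap\partial\Omega$, Serrin's corner lemma for the region between $T_{\lambda_*}$ and the exterior gives a nonvanishing first or second derivative of $u-u_{\lambda_*}$ at $q$. This contradicts the identity of second derivatives computed from $C^2$ regularity up to the boundary on each side, together with the transmission condition.
\end{itemize}

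\textbf{Main obstacle.} I expect this corner case to be the hardest step. Since $u$ is only $C^1$ globally, one has to check that the one-sided second derivatives of $u^\pm$ at $q$ are compatible with symmetry. This uses that $u^\pm\in C^{2,\alpha}$ up to $\partial\Omega$ by Schauder theory, and that $\nabla u^+=\nabla u^-$ along $\partial\Omega$, so that differentiating tangentially yields matching mixed derivatives. Hence $u\equiv u_{\lambda_*}$, and $\Omega$ is symmetric about $T_{\lambda_*}$. Since the direction of $\ell$ is arbitrary, $\Omega$ is symmetric with respect to hyperplanes in every direction through a common point. It is moreover connected, and $\overline\Omega$ is strictly star-shaped from the monotonicity obtained along the way. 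Therefore $\Omega$ is a geodesic ball.
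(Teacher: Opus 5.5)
Your framework (moving planes run on both phases simultaneously) is the paper's, but the steps that are supposed to produce the contradiction at $\lambda_*$ do not work as written.

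In case (i) you justify $\nabla u(p)=\nabla u_{\lambda_*}(p)$ by tangency (``as normals agree''). Tangency only makes both vectors normal to $\partial\Omega$ at $p$. Nothing relates $|\nabla u(p)|$ to $|\nabla u(R_{\lambda_*}p)|$, since, unlike in Serrin's problem, there is no Neumann datum. Hence one Hopf inequality, interior \emph{or} exterior, contradicts nothing.

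In case (ii), the corner lemma on the exterior region alone only fixes the sign of $\partial^2_{x_1x_n}u^-(q)$ (in the paper's coordinates). Differentiating $\nabla u^+=\nabla u^-$ tangentially gives only $\partial^2_{x_1x_n}u^+(q)=\partial^2_{x_1x_n}u^-(q)$, not that this quantity vanishes. So again there is no contradiction.

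Moreover, your orderings are reversed. For $\lambda\ge\lambda_*$ one has $R_\lambda(\Omega)\cap H_\lambda\subset\Omega$ on the half-space $H_\lambda$ not containing the cap. So the set you use ($R_\lambda x\in\Omega$, $x\notin\Omega$) is empty there. In fact $-\Delta(u-u_\lambda)=\mathcal X_\Omega-\mathcal X_{R_\lambda\Omega}\ge0$, i.e.\ $u\ge u_\lambda$ on $H_\lambda$ in \emph{both} phases. Your exterior ordering $u^-<u^-\circ R_\lambda$, and your conclusion $u\le u_\lambda$, are the opposite. The exterior part of $H_\lambda$ is bounded by $T_\lambda$ and by $\partial\Omega\cap H_\lambda$, where $u^-=0\ge u^-\circ R_\lambda$; meanwhile $R_\lambda(\partial\Omega)\cap H_\lambda$ lies in $\overline\Omega$. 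This is not cosmetic. The two-phase contradiction hinges on both differences having the \emph{same} sign, while $\nu$ is the outward normal of one comparison region and the inward normal of the other.

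The paper closes both cases by applying Hopf's lemma, resp.\ Serrin's corner lemma, to \emph{both} $U^+=u^+-u^+\circ R_{t_*}>0$ on the reflected cap and $U^-=u^--u^-\circ R_{t_*}>0$ on the exterior component $\Sigma$ touching $P$ (resp.\ $Q$). In case (i) this gives $\partial_\nu u^+(P)<\partial_\nu u^+(R_{t_*}P)$ and $\partial_\nu u^-(P)>\partial_\nu u^-(R_{t_*}P)$, incompatible with $\nabla u^+=\nabla u^-$ at $P$ and at $R_{t_*}P$. In case (ii) it gives $\partial^2_{x_1x_n}u^-(0)<0<\partial^2_{x_1x_n}u^+(0)$, incompatible with the matching of mixed derivatives.

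Alternatively, your global comparison, once correctly oriented, settles both cases more cheaply. The function $w=u-u\circ R_{\lambda_*}$ is $C^{1,1}$, because $u\in C^1$ and $u^\pm$ are $C^{2,\alpha}$ up to $\partial\Omega$. On the connected half-space $H_{\lambda_*}$ we have $-\Delta w\ge0$ and $w\ge0$, so either $w\equiv0$ or $w>0$ there, by the strong maximum principle.
\begin{itemize}
\item In case (i) the tangency point $p$ is an interior point of $H_{\lambda_*}$ with $w(p)=0$, so $w\equiv 0$. This interior minimum is also the real reason why $\nabla w(p)=0$.
\item In case (ii) $q$ is a smooth boundary point of $H_{\lambda_*}$. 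Hopf's lemma gives $\partial_e w(q)\ne0$ for $e$ normal to $T_{\lambda_*}$. But $\partial_e w(q)=2\langle\nabla u(q),e\rangle=0$, because $e$ is tangent to $\partial\Omega$ at $q$.
\end{itemize}
That route needs no corner lemma and is simpler than the paper's, but it is not the argument you wrote.
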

This overdetermined problem \eqref{overdetermined for hyperbolic space} is derived as follows. Let $v=v(x,t)$ be the unique bounded solution of the Cauchy problem \eqref{Eq_heat-Cauchy} where $\sss^n$ is replaced with $\mathbb H^n$ (see Corollary \ref{Cor_uniqueness_H^n}). Then by the maximum principle we have \eqref{between 0 and 1} where $\sss^n$ is replaced with $\mathbb H^n$.  Denote by $v^+$ and $v^-$ the restrictions of $v$ to $\overline{\Omega^+}$ and $\overline{\Omega^-}$  respectively. Then we also have \eqref{transmission condition for heat}. Assume that $\partial\Omega$ is stationary isothermic, that is, there exists a function $a=a(t)$ for $t > 0$ satisfying
\eqref{stationary isothermic}.  Since $\Omega$ is bounded, $v$ decays to $0$ as $t \to +\infty$ exponentially in $t$. Then we may define the function $V=V(x)$ and the number $b$ by
\begin{align}\label{time average hyperbolic space}
 V(x)= \int_0^\infty v(x,t) \ \dint{t}\ \mbox{  for }x \in \mathbb H^n\ \mbox{ and }\ b=\int_0^\infty a(t)\ \dint{t}.
 \end{align}
 Here $V(x) \to 0$ as $d^{\mathbb H^n}\!(x, o) \to \infty$, since $\Omega$ is bounded.
 Denote by $V^+$ and $V^-$ the restrictions of $V$ to $\overline{\Omega^+}$ and $\overline{\Omega^-}$  respectively.
Then we observe that 
\begin{align}\label{elliptic transmission problem hyperbolic space}
-\mbox{div}(\sigma\nabla V) = \mathcal X_\Omega\ \mbox{ in } \mathbb H^n,
\end{align}
and by \eqref{transmission condition for heat} 
\begin{align}\label{transmission condition for elliptic hyperbolic space}
V^+=V^-=b\mbox{ and } \sigma^+\partial_\nu V^+=\sigma^-\partial_\nu V^-\ \mbox{ on }\partial\Omega.
\end{align}
Eventually, the function $u=u(x)$ given by  $u=\sigma(V-b)$ satisfies \eqref{overdetermined for hyperbolic space} with $c = \sigma^-b$.

\medskip

Another main result of the present paper, which is not based on the method of moving planes, is the following rigidity theorem for  problem  \eqref{Eq-main} in $\sss^2$. It is obtained by adapting to our case the technique used in \cite{EM19}.
\begin{theorem}\label{t_main}
Let $\Omega\subset\sss^2$ be a connected and simply connected domain with $|\Omega|<|\sss^2|$. Assume that there exists a solution $u\in C^1(\sss^2)$  to problem  \eqref{Eq-main}.
Then $\Omega$ is a geodesic ball.
\end{theorem}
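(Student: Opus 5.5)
We follow the strategy of Espinar--Mazet \cite{EM19}, applying it separately to the two phases and gluing across $\partial\Omega$ via \eqref{Eq-compatibility}. Since $\Omega$ is simply connected with $|\Omega|<|\sss^2|$, the complement $\Omega^-$ is a topological disc as well, so $\sss^2=\overline{\Omega^+}\cup\overline{\Omega^-}$ is the union of two discs with common boundary curve $\partial\Omega$. On each phase we consider the traceless symmetric tensor measuring the deviation of $u^\pm$ from being radial. Regard the metric as $|dz|^2$ times the conformal factor in a local conformal coordinate $z$. A function $\phi$ of the geodesic distance from a point has Hessian of the form $\lambda\, g+\mu\, d\phi\otimes d\phi$. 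We therefore form the $(2,0)$ part of
\[
\nabla^2 u^\pm - \frac{\Delta u^\pm}{2}g - h^\pm\Big(du^\pm\otimes du^\pm-\frac{|\nabla u^\pm|^2}{2}g\Big),
\]
where $h^\pm=h^\pm(u,|\nabla u|^2)$ is chosen, as in \cite{EM19} and \cite{GM20}, so that the resulting quadratic differential $Q^\pm\,dz^2$ vanishes identically for the radial solutions (geodesic balls and their complements). The key computation uses the equation $-\Delta u^\pm=$ const and the Gauss curvature $K=1$ to show that $Q^\pm$ satisfies a differential inequality $|\partial_{\bar z}Q^\pm|\le C|Q^\pm|$. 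It follows that the zeros of $Q^\pm$ are isolated, unless $Q^\pm\equiv0$, and that each zero has negative index for the associated line field. Note that $\Delta u$ jumps across $\partial\Omega$, so $Q$ is genuinely two-sided.

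Next we analyse the boundary. On $\partial\Omega$ we have $u=0$ and $\nabla u^+=\nabla u^-=-|\nabla u|\nu$, so $du\otimes du$ agrees from both sides. The Hessians differ only in the normal-normal component, since the tangential second derivatives are $\partial_\nu u\,\kappa$, which are equal on both sides. The trace terms are removed in the traceless parts, and the values of $h^\pm$ on $\{u=0\}$ can be matched because the two functions depend on $u$ and $|\nabla u|^2$, which coincide there. We therefore expect $Q^+=Q^-$ along $\partial\Omega$, so the line fields glue into a continuous line field on all of $\sss^2$ with isolated singularities. If they do not glue exactly, the fallback is the \cite{EM19} boundary analysis on each disc: $\mathrm{Im}(Q\,dz^2)$ restricted to the tangent direction vanishes, or has a sign, along $\partial\Omega$, and the disc indices are then compared.

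Finally, we argue by contradiction. If $Q\not\equiv0$, the Poincar\'e--Hopf theorem on $\sss^2$ requires the indices of the singularities to sum to $\chi(\sss^2)=2>0$. We must also control the critical points of $u$, which are the points where $du=0$; at these the Hessian term dominates, and $u^+$ attains an interior maximum and $u^-$ an interior minimum. All singularities have nonpositive index, which is impossible. Hence $Q^\pm\equiv0$. Then $u$ is locally radial about its critical point, so the level sets of $u^+$ are geodesic circles, and $\partial\Omega$ is a geodesic circle.

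The main obstacle will be the choice of $h^\pm$ so that $Q$ is simultaneously holomorphic-like in each phase and continuous across the interface. The two phases have different constants, $1$ and $-\gamma$, and $\gamma$ itself depends on $|\Omega|$. The radial model solutions must therefore be parametrised so that their gradient profiles at $u=0$ match. We would try first to define $h^\pm$ through the ODE satisfied by the radial solutions, expressed in terms of $(u,|\nabla u|^2)$, using the first integral coming from $K=1$.
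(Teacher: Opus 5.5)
Your route is the paper's. Your $h^\pm$-formulation is the same form $Q^\pm=\nabla^2u^\pm-\nabla^2v^\pm_x(x)$, since a radial Hessian is $\lambda g+\mu\,dv\otimes dv$ and $\Delta u^\pm=\Delta v^\pm_x$. It is followed, as in the paper, by gluing across $\partial\Omega$ and Poincar\'e--Hopf on $\sss^2$.

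\textbf{The gap is the gluing step, and your justification for it is wrong.} Requiring $Q^\pm$ to vanish on all regular radial solutions of each phase leaves no freedom in $h^\pm$; this is the family \cite{EM19} needs for the $\bar\partial$-inequality. Since $\nabla^2\cos r=-\cos r\,g$ on $\sss^2$, the profile $v=\ln\frac{1+\cos r}{1+\cos R}$ satisfies $\nabla^2v+dv\otimes dv=\lambda g$, so $h^+\equiv-1$. Likewise $h^-\equiv1/\gamma$ for $-\gamma$ times such a profile. These two values are different, so they cannot be ``matched'' at $u=0$.

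Removing traces also does not remove the jump of $\Delta u$. On $\partial\Omega$ you have $\nabla^2u^+-\nabla^2u^-=-(1+\gamma)\,\nu\otimes\nu$, whose traceless part is $-\frac{1+\gamma}{2}(\nu\otimes\nu-\tau\otimes\tau)$, not zero. Adding the jump of the $h$-term gives
\[
Q^+-Q^-=\frac{1+\gamma}{2}\Big(\frac{|\nabla u|^2}{\gamma}-1\Big)(\nu\otimes\nu-\tau\otimes\tau)\quad\textnormal{on }\partial\Omega .
\]
So $Q$ is continuous only where $|\nabla u|^2=\gamma$. But $|\nabla u|\equiv\sqrt{\gamma}$ on $\partial\Omega$ is Serrin's condition for $u^+$, which by \cite{EM19} already gives the theorem. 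The gluing is as strong as the conclusion.

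Your fallback fails for the same reason. On $\partial\Omega$ one has $Q^\pm(\tau,\nu)=\nabla^2u(\tau,\nu)=-\tau(|\nabla u|)$. This neither vanishes nor has a sign unless $|\nabla u|$ is constant, so $\partial\Omega$ is not a line of curvature of $Q$ and the disc index count of \cite{EM19} is unavailable. Without a continuous line field on $\sss^2$, plus a removability argument across $\partial\Omega$ like Lemma~\ref{l_isolated_singularities}, Poincar\'e--Hopf cannot be invoked. Other choices of $h^\pm$ would have to come from a different, necessarily singular, family of radial profiles. That family must satisfy $h^+-h^-=-(1+\gamma)/|\nabla u|^2$ at $u=0$ and still give the $\bar\partial$-inequality. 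This is the missing idea; your last paragraph only names it as an obstacle.

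You should also know that the paper asserts this continuity in Lemma~\ref{l_Q_continuity}, so it does not fill the gap for you. Its argument uses $\nabla^2v^\pm_x(x)(\tau,\tau)=0$, which is false. For a radial function $\nabla^2v(\tau,\tau)=\partial_rv\,\cot r$, which equals $\frac{|\nabla u|^2-1}{2}$ for $v^+_x$ and $\frac{\gamma-|\nabla u|^2/\gamma}{2}$ for $v^-_x$. For example, take $\Omega=B_R$ with $R\neq\pi/2$: there $Q\equiv0$, yet $\nabla^2u^+(\tau,\tau)\neq0$ on $\partial B_R$, contradicting the lemma's conclusion $Q^+(\tau,\tau)=\nabla^2u^+(\tau,\tau)$. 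So the paper's proof contains the same unproved step, and the gluing remains open in both arguments.
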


As in \cite{EM19}, we get a more general version of the above theorem. To avoid any technicality that could make things unclear to the reader, we prefer to provide a detailed proof of the rigidity result only in the case of Theorem \ref{t_main}, and then to highlight the differences needed to prove the following generalization.
\begin{theorem}\label{t_general}
Let $\Omega\subset\sss^2$ be a connected and simply connected domain with $|\Omega|<|\sss^2|$ and denote $\Omega^+:=\Omega$ and $\Omega^-:=\sss^2\setminus \overline{\Omega}$. Consider a positive $C^1$ function $f$  satisfying the inequality
\begin{align}\label{Eq-condition-f}
f(x)\geq xf^\prime(x) \quad \mbox{ for every } x > 0.
\end{align}
For $R:=\cos^{-1}\left(1-\frac{|\Omega^+|}{2\pi}\right)$, i.e. $R>0$ is so that $|B_R|=|\Omega^+|$, and $\gamma>0$,
assume that there exists a solution $u\in C^1(\sss^2)$  to the overdetermined problem
\begin{align}\label{Eq-general}
\begin{cases}
-\Delta u=f(u)\chi_{\Omega^+}-\gamma f\left(-\frac{u}{\gamma}\right) \chi_{\Omega^-} & in\ \sss^2,\\
u\equiv 0 & on\ \partial \Omega,
\end{cases}
\end{align}
Then $\Omega$ is a geodesic ball.
\end{theorem}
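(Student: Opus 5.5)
Following \cite{EM19}, the plan is to build a holomorphic-type quadratic differential that measures how far $u$ is from being radial, and then use the Poincar\'e--Hopf theorem on the disc $\overline{\Omega^+}$ and on the disc $\overline{\Omega^-}$ (both are topological discs, since $\Omega$ is simply connected in $\sss^2$ and $\partial\Omega$ is a Jordan curve).

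\textbf{Step 1: the radial comparison solutions.} On each phase I first compare $u^\pm$ with the radial solutions on geodesic balls. On $\Omega^+$, $u^+>0$ solves $-\Delta u^+=f(u^+)$. On $\Omega^-$, set $\tilde u:=-u^-/\gamma>0$; it solves $-\Delta\tilde u=f(\tilde u)$ with $\tilde u=0$ on the boundary. By the compatibility condition $\nabla u^+=\nabla u^-$ on $\partial\Omega$ we get $\partial_\nu u^+=-\gamma\,\partial_{\nu}\tilde u$ there. Following \cite{EM19}, for each value $t$ of $u$ I introduce the one-parameter family of radial solutions $\phi$ of $-\phi''-\cot r\,\phi'=f(\phi)$. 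The condition $f\ge xf'$ gives, as in \cite{EM19}, a monotonicity of $|\phi'|^2$ as a function of $\phi$ along this family. This lets me associate to $u^\pm$ a function $P^\pm=G(u,|\nabla u|^2)$, modeled on the classical $P$-function. It is constant exactly for radial solutions, and it satisfies a maximum principle.

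\textbf{Step 2: the traceless form.} On each phase I define the traceless symmetric $2$-tensor
\[
Q=\nabla^2 u-\tfrac12\Delta u\, g+\lambda(u,|\nabla u|^2)\Big(du\otimes du-\tfrac12|\nabla u|^2g\Big),
\]
with $\lambda$ chosen from the radial model so that $Q\equiv0$ for radial solutions. Away from the zeros of $Q$, its eigendirection line field has isolated singularities of negative index. This is the Codazzi-type / holomorphic-quadratic-differential structure of \cite{GM20,EM19}, obtained by computing $\operatorname{div}Q$ using the PDE. Critical points of $u$ must also be handled; there the natural line field is the gradient line field.

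\textbf{Step 3: boundary behaviour and the contradiction.} On $\partial\Omega$ we have $u=0$, so $du$ is normal there. A direct computation shows that the boundary values of $Q$ are determined by the curvature of $\partial\Omega$ and by $\partial_\nu u$, with opposite signs on the two sides. If $\Omega$ is not a ball, the Poincar\'e--Hopf count on one disc requires total index $1$ from interior singularities, which are all of negative index, unless the boundary rotation compensates. The boundary contributions from $\Omega^+$ and from $\Omega^-$ have to match, because $\partial_\nu u^+=\partial_\nu u^-$ on $\partial\Omega$. I expect this to force the tangential derivative of the boundary curvature term to vanish on at least one side, so $Q\equiv0$ there and $u^\pm$ is radial.

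\textbf{Main obstacle.} Hence that phase is a geodesic ball, so $\Omega$ is a geodesic ball, and the area normalization $|B_R|=|\Omega^+|$ identifies $R$. The delicate point is the boundary index matching in Step 3. It requires comparing the radial models on $B_R$ and on its complement with the same boundary value of $|\nabla u|$. This is where the two-phase structure and the hypothesis on $f$ (monotonicity of the radial profile in the parameter) must combine.
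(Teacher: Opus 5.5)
The gap is Step 3. It is stated as an expectation, and the mechanism you propose cannot work. At a point of $\partial\Omega$ the radial comparison function has gradient normal to $\partial\Omega$, so it has no mixed Hessian term, and also $du(\tau)=0$ there. Hence, for your form as well as for the paper's,
\[
Q^+(\tau,\eta)=Q^-(\tau,\eta)=\nabla^2u(\tau,\eta)=\tau(\partial_\eta u)\qquad\text{on }\partial\Omega .
\]
The boundary quantity you want to kill is therefore the same function seen from both sides; this is all that $\nabla u^+=\nabla u^-$ gives you. It vanishes identically only if $\partial_\eta u$ is constant on $\partial\Omega$. That is precisely the one-phase Serrin condition, which in the setting of \cite{EM19} gives $Q(\tau,\eta)=0$ on the boundary, and it is not available here. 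So nothing singles out ``at least one side''.

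Consequently the null line fields of $Q^\pm$ are not at a fixed angle to $\partial\Omega$. Poincar\'e--Hopf on a single disc then contains an uncontrolled boundary rotation and yields no contradiction, and zeros of $Q^\pm$ on $\partial\Omega$ are not addressed at all. What must actually be shown is that the two boundary rotation numbers cancel, so that the two disc-wise index formulas add up to $\chi(\sss^2)=2$.

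Two smaller points. The $P$-function of Step 1 is never used: in the paper, $f(x)\ge xf'(x)$ only serves, via \cite[Section 3.2]{EM19}, to produce at every point a radial candidate on a geodesic ball matching $(u(x),\nabla u(x))$. Also, at critical points of $u$ you should keep the line field of $Q$, since the gradient line field has singularities of positive index there.

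The paper's route avoids boundary terms altogether. It glues $Q^+$ and $Q^-$ into one form on $\sss^2$ (Lemma \ref{l_Q_continuity}) and carries $|\partial_{\bar z}P|\le C|P|$ across the interface. It then writes $P=e^{is}h$ with $h$ holomorphic through $\partial\Omega$, via a Cauchy transform and Morera's theorem (Lemma \ref{l_isolated_singularities}). Every zero, including those on $\partial\Omega$, is then isolated of index $-k/2$. Poincar\'e--Hopf on the closed sphere forces $Q\equiv0$, and $u\in\widetilde{\mathcal C}$ follows as in \cite[Claim E]{EM19}.

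Be aware, however, that the interface matching is exactly the two-phase difficulty and is not a formal consequence of $\nabla u^+=\nabla u^-$. The mixed components and the traces agree. But for a radial $v$ one has $\nabla^2v(\tau,\tau)=v'(r)\cot r$, not $0$ as asserted in the proof of Lemma \ref{l_Q_continuity}; already for $\Omega=B_R(o)$ with $R\neq\pi/2$ this is nonzero. Writing $R^\pm$ for the radii of the candidate balls at $x\in\partial\Omega$, one gets
\[
Q^+_x(\tau,\tau)-Q^-_x(\tau,\tau)=|\nabla u(x)|\,\bigl(\cot R^+ +\cot R^-\bigr).
\]
This vanishes only when $R^-=\pi-R^+$; for $f\equiv1$, only when $|\nabla u(x)|^2=\gamma$.

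So on either route, Step 3 needs a genuine argument that this jump does not change the winding of $Q$ along $\partial\Omega$. One sufficient condition would be that $Q^+(\tau,\tau)$ and $Q^-(\tau,\tau)$ have the same strict sign wherever $\tau(\partial_\eta u)=0$. Your proposal supplies no such argument.
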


The last theorem we present involves the eigenvalue problem, which does not fall into the class of nonlinearities presented in Theorem \ref{t_general}.
\begin{theorem}\label{t_eigenvalue}
Let $\Omega\subset\sss^2$ be a connected and simply connected domain with $|\Omega|<|\sss^2|$ and denote $\Omega^+:=\Omega$ and $\Omega^-:=\sss^2\setminus \overline{\Omega}$. For the first Dirichlet eigenvalue $\lambda(U)$ of the Laplace-Beltrami operator on the domain $U\subset \sss^2$, assume that there exists a nontrivial solution $u\in C^1(\sss^2)$ to the overdetermined problem
\begin{align}\label{Eq-eigenvalue}
\begin{cases}
-\Delta u=\Big[\lambda(\Omega^+)\chi_{\Omega^+}+\lambda(\Omega^-) \chi_{\Omega^-}\Big]u & in\ \sss^2,\\
u\equiv 0 & on\ \partial \Omega.
\end{cases}
\end{align}
Then $\Omega$ is a geodesic ball.
\end{theorem}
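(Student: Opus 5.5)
We follow the Espinar--Mazet strategy of \cite{EM19} and treat the two phases at once. Let $u^+>0$ be the first Dirichlet eigenfunction on $\Omega^+$ and $u^-<0$ a multiple of the first eigenfunction on $\Omega^-$, so that $u=u^+\chi_{\Omega^+}+u^-\chi_{\Omega^-}$. The $C^1$ matching then fixes the relative normalization along $\partial\Omega$: $\partial_\nu u^+=\partial_\nu u^-$, and both are negative by Hopf's lemma.

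\emph{Step 1: the radial model.} For geodesic balls $B_{r}$ centred at a point $p$, let $\phi_{r}(d(\cdot,p))$ be the radial first eigenfunction, positive on $B_r$. On each phase we attach to $u^\pm$ the one-dimensional solution of the ODE
\[
\psi''+\cot(s)\psi'+\lambda\psi=0, \qquad \lambda=\lambda(\Omega^\pm).
\]
Its level-set geometry is the one $u^\pm$ would have if $\Omega^\pm$ were a geodesic ball with the same eigenvalue. Since $\lambda$ is fixed on each phase, the scalar $u$ determines a ``radius'' $s^\pm(u)$ through $\psi(s)=u$, after the scaling normalization $\max u^+=\psi(0)$. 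In the rotationally symmetric situation $|\nabla u|^2=G^\pm(u)$ for an explicit function $G^\pm$.

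\emph{Step 2: the Hopf-type differential.} Following \cite{EM19,GM20}, define on each phase the traceless symmetric 2-tensor
\[
T^\pm=\nabla^2u^\pm-\tfrac12\Delta u^\pm\, g-\beta^\pm(u)\Big(du\otimes du-\tfrac12|\nabla u|^2 g\Big),
\]
with $\beta^\pm$ chosen from the radial model so that $T^\pm$ vanishes identically exactly when $u^\pm$ is radial. In isothermal coordinates $T$ is the real part of a quadratic differential $Q\,dz^2$. The key computation, using the equation $\Delta u=-\lambda u$ on each side, is that $\partial_{\bar z}Q$ is a combination of $Q$ and $\bar Q$ with bounded coefficients. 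This is the analogue of the Codazzi-type identity in \cite{EM19}. It yields a Carleman/Hartman--Wintner structure: either $Q\equiv0$, or the zeros of $Q$ are isolated with negative index ($-n/2$) for the associated line fields.

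\emph{Step 3: boundary behaviour and index count.} On $\partial\Omega$ we have $u=0$ and $\nabla u$ is normal. We need to show that $\partial\Omega$ is a line of curvature for $T^\pm$, and that $T^+$ and $T^-$ glue continuously across $\partial\Omega$, so that $Q$ extends to the whole of $\sss^2$ apart from the two critical points of $u$. This uses the $C^1$ matching together with the fact that $\beta^\pm(0)$ terms are fixed by the compatibility $\partial_\nu u^+=\partial_\nu u^-$, possibly after multiplying $T^-$ by a positive factor. At an interior maximum of $u^+$ (respectively minimum of $u^-$), which we will first show to be unique and nondegenerate by the radial comparison, the tensor is regular and has index $\ge0$. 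Poincaré--Hopf on $\sss^2$ demands total index $2$, while the zeros of $Q$ contribute negative index and the two extremal points at most $+1$ each. Hence the zeros contribute $0$, so $Q\equiv0$. Then $u^\pm$ is radial about its extremal point, $\partial\Omega$ is a level circle, and $\Omega$ is a geodesic ball.

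\emph{Main obstacle.} The delicate step is Step 3. Unlike the case of Theorem \ref{t_general}, the nonlinearity $\lambda(\Omega^\pm)u$ is linear and the eigenvalues are not a priori those of balls of area $|\Omega^\pm|$. So $\beta^\pm$ must be built from the model with the \emph{given} eigenvalue, and one must check that the two models are consistent on $\partial\Omega$, i.e.\ that the boundary levels and gradients match so that $T$ is continuous and nondegenerate there. I would first establish an a priori gradient bound $|\nabla u|^2\le G^\pm(u)$ via a $P$-function maximum principle, as in \cite{EM19}. This bound controls the sign of the indices at the extremal points and excludes degenerate boundary behaviour.
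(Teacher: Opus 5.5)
Your Step 2 does not go through as formulated. Because you fix a single radial model per phase (normalized by $\max u^+$), $\beta^\pm$ depends on $u$ alone, and for such a tensor the Codazzi-type identity is false. In a conformal coordinate with $g=e^{2\omega}|dz|^2$, put $Q=\nabla^2u(\partial_z,\partial_z)-\beta(u)\,u_z^2$. Using $\Delta u=-\lambda u$ and $K=1$ one finds
\[
\partial_{\bar z}Q=\tfrac14 e^{2\omega}u_z\bigl[(2-\lambda)+2\lambda u\,\beta(u)-\beta'(u)|\nabla u|^2\bigr]=\tfrac14 e^{2\omega}\beta'(u)\bigl(G(u)-|\nabla u|^2\bigr)u_z .
\]
The second form holds because the bracket vanishes on the model, where $|\nabla u|^2=G(u)$. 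If $\beta'\equiv0$, the bracket forces $\beta\equiv0$ and $\lambda=2$. So for $\lambda\neq2$ the error term $G(u)-|\nabla u|^2$ is not bounded by $C|Q|$, since $Q$ can vanish where $|\nabla u|^2\neq G(u)$. You therefore have neither the Carleman inequality nor isolated zeros of negative index.

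The obstruction is structural. For the linear equation, the radial solutions on a phase form the family $c\,\phi_{R^\pm}(d(q,\cdot))$ with $c>0$ and $q\in\mathbb{S}^2$, so the value $u(x)$ does not select a model. The paper, following \cite{EM19}, compares $\nabla^2u^\pm(x)$ with $\nabla^2v^\pm_x(x)$, where $v^\pm_x$ matches the full $1$-jet $(u^\pm(x),\nabla u^\pm(x))$. Since $u^\pm$ and $v^\pm_x$ solve the same equation with the same $1$-jet, only terms controlled by $|P|$ survive in $\partial_{\bar z}P$ (Remark~\ref{rmk_P}). In your language, $\beta^\pm$ must depend on $(u,|\nabla u|)$.

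Step 3 also fails, in three places.

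\emph{The index count gives no contradiction.} If each extremal point may carry index up to $+1$, then ``the zeros contribute $0$'' only says that $Q$ has no other zeros. That is compatible with Poincar\'e--Hopf ($1+1=2$) and does not give $Q\equiv0$.

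\emph{The extrema are not controlled.} Their uniqueness and nondegeneracy are unproved. They fail for first eigenfunctions of general non-convex simply connected domains (think of dumbbells), and the overdetermined condition does not give them a priori. In the paper, $Q$ is defined at every point, critical points included; there $v^\pm_x$ is the model centred at $x$. Every singularity of the null line fields is then a zero of $Q$ of index $-k/2<0$, so $\chi(\mathbb{S}^2)=2>0$ forces $Q\equiv0$, and no analysis at the extrema is needed.

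\emph{Your gluing mechanism does not work.} On $\partial\Omega$ we have $\Delta u^\pm=-\lambda^\pm u=0$, so the compatibility condition gives $\nabla^2u^+=\nabla^2u^-$ there. Hence $T^+-T^-=-(\beta^+(0)-\beta^-(0))(du\otimes du-\frac12|\nabla u|^2g)$. Continuity requires $\beta^+(0)=\beta^-(0)$, a relation between two independently normalized models that nothing provides. Rescaling $T^-$ by a positive factor would break the Hessian matching.

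You are right that the interface is the crux, but Lemma~\ref{l_Q_continuity} cannot be imported as is. Its proof uses $\nabla^2v^\pm_x(x)(\tau,\tau)=0$, whereas a radial function on $\mathbb{S}^2$ has tangential Hessian $\hat v'(r)\cot r$ at distance $r$ from the centre. Carrying this through gives $Q^+_x(\tau,\tau)-Q^-_x(\tau,\tau)=|\nabla u(x)|(\cot R^++\cot R^-)$ on $\partial\Omega$, which vanishes only if $R^++R^-=\pi$. For the eigenvalue problem, $R^\pm$ are fixed by $\lambda(\Omega^\pm)$. The spherical Faber--Krahn inequality gives $R^++R^-\le\pi$, with equality only when both phases are geodesic balls. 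So a complete proof along these lines must also deal with this jump.
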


\noindent\textbf{Structure of the paper.} Section \ref{Sec-t-main_1_2} is devoted to the proofs of Theorems \ref{t_main_1} and \ref{t_main_2}. It is  explained how to apply the method of moving planes to our problems in detail. In sSction \ref{Sec-t-main} we prove Theorem \ref{t_main} by adapting the technique in \cite{EM19}  to our problem. However, there is a major difference between our problem and the Serrin problem (\cite[Corollary 4.3, p. 2069]{EM19}), which comes from the difference of overdetermination. Classes of candidate  solutions in subsection \ref{candidates solutions for our problem} and the continuity of the form $Q$ on the interface in Lemma \ref{l_Q_continuity}  together with Lemma \ref{l_isolated_singularities} are specific to our problem. In section \ref{Proofs of Theorems 1.4 and 1.5}, we explain how to modify the proof of Theorem \ref{t_main} to prove Theorems \ref{t_general} and \ref{t_eigenvalue}. The Appendix is devoted to proving the uniqueness of the bounded solution to the Cauchy problem for the diffusion equation in the two-phase heat conductor $\mathbb H^n$.


\section{Proofs of Theorems \ref{t_main_1} and \ref{t_main_2}}\label{Sec-t-main_1_2}

Let us first prove Theorem \ref{t_main_1}. Let $\sss^n_+$ be the open upper hemisphere containing the north pole $\mathcal N$. Without loss of generality, we assume that  $\partial\Omega \subset \sss^n_+$. Then either $\overline{\Omega} \subset \sss^n_+$ or $\sss^n \setminus \Omega \subset \sss^n_+$. The proof for the latter case is similar to that of the former case, or if $\sss^n \setminus \Omega \subset \sss^n_+$, we may consider  the function $-\frac 1\gamma u$ with  $\sss^n \setminus \Omega$ instead of $u$ with $\Omega$. Let us consider the former case and apply the method of moving planes to $\overline{\Omega}\subset  \sss^n_+$. With the arguments of  \cite{KS23, DNS25} in mind, we follow \cite[Proof of Theorem 4, pp. 150--156]{M91}. 

Let $u$  be the solution to the overdetermined problem \eqref{Eq-main}. Since $\partial\Omega$ is of class $C^{2,\alpha}$, it follows from the regularity theory of elliptic partial differential equations (see \cite{GT83}), $u^+ \in C^{2,\alpha}(\overline{\Omega^+})$ and $u^-\in C^{2,\alpha}(\overline{\Omega^-})$, where $u^+, u^-$ are given in \eqref{Eq-decoupled problems for W}. 

Let $\gamma: t \mapsto \gamma(t)$ be a geodesic in $\sss^n$ through $\mathcal N$ with arclength parameter,  and for each $t$ let $H_t$ denote the totally geodesic hypersurface through the point $\gamma(t)$ which is orthogonal to the unit tangent vector $\gamma^\prime(t)$. For each $t$, let $M^-_t$ be the open set of points in $\sss^n$ lying on the side of $H_t$ which contains $\{ \gamma(s)\, :\, t -\pi< s < t\}$ and let $M^+_t$ be that on the other side of $H_t$. Thus $\sss^n=M^-_t\sqcup H_t \sqcup M^+_t$. Denote by $R_t$ be the isometry of $\sss^n$ defined by the reflection in $H_t$. 

Since $\overline{\Omega}\subset  \sss^n_+$, there exists $t_0$ with $H_{t_0}\cap \overline{\Omega}=\emptyset$. For $t \in (t_0, t_0+\pi)$ with $H_t\cap\Omega\not=\emptyset$, set $\Omega_t = \Omega \cap M^-_t$ and let
$\Omega^t$ be the image of $\Omega_t$ under $R_t$.
We infer that there exists  $t_1 \in (t_0, t_0+\pi)$ such that
\begin{align}\label{moving planes}
 \Omega_{t_1}\not=\emptyset\ \mbox{ and } \ \Omega^{t_1} \subset \Omega.
 \end{align}
Then, as $t$ increases, $\Omega^t$ remains in $\Omega$  until one of the following two events occurs for some $t_*\in [t_1,  t_0+\pi)$:
\begin{itemize}
\item[ (i)]\ $\Omega^{t_*}$ becomes internally tangent to $\partial\Omega$ at some point $P \in \partial\Omega\setminus \overline{\Omega_{t_*}}$;
\item[ (ii)]\ $H_{t_*}$ reaches a position where it is orthogonal to $\partial\Omega$ at some point $Q \in \partial\Omega \cap H_{t_*}$, and for every $t \in (t_0, t_*)$ with $H_t\cap\Omega\not=\emptyset$, $H_t$ is not orthogonal to $\partial\Omega$ at all points in $\partial\Omega\cap H_t$. 
\end{itemize}
We claim that $\Omega$ is symmetric with respect to $H_{t_*}$. Suppose that $\Omega$ is not symmetric with respect to $H_{t_*}$. Denote by $D$ the image of $\left(\sss^n\setminus\overline{\Omega}\right)\cap M^-_{t_*}$ under $R_{t_*}$. Let $\Sigma$ be the connected component of $\left(\sss^n\setminus\overline{\Omega}\right)\cap M^+_{t_*}$ whose boundary contains the points $P$ and $Q$ in the respective cases (i) and (ii). Since $\Omega^{t_*}\subset\Omega$, we notice that
\begin{align}
\Sigma\subset\left(\sss^n\setminus\overline{\Omega}\right)\cap M^+_{t_*} \subset D.
\end{align}
Let us introduce the two functions $U^\pm=U^\pm(x)$ by
\begin{align}\label{two auxiliary functions}
U^+(x) := u^+(x)-u^+(R_{t_*}x)\ &\mbox{ for } x \in \overline{\Omega^{t_*}}, \\
U^-(x) := u^-(x)-u^-(R_{t_*}x)\ &\mbox{ for } x \in \overline{\Sigma}.
\end{align}
Then it follows from \eqref{Eq-decoupled problems for W} and \eqref{positivity of upm} that 
\begin{align}\label{harmonicity of Upm}
&\Delta U^+= 0 \mbox{ in } \Omega^{t_*}\ \mbox{ and }\ U^+ \ge 0 \mbox{ on } \partial \Omega^{t_*}, \\
&\Delta U^-= 0 \mbox{ in } \Sigma\ \quad \mbox{ and }\ U^- \ge 0 \mbox{ on } \partial \Sigma,
\end{align}
and hence by the maximum principle
\begin{equation}\label{positivity of Upm}
U^+ > 0 \mbox{ in } \Omega^{t_*}\ \mbox{ and }\ U^- > 0 \mbox{ in } \Sigma,
\end{equation}
since $U^+\not\equiv 0$ on $\partial \Omega^{t_*}$ and $U^- \not\equiv 0$ on $\partial \Sigma$ because of our assumption that $\Omega$ is not symmetric with respect to $H_{t_*}$.

\medskip
Let us first consider the case (i). Since $U^+(P)=U^-(P)=0$ from \eqref{Eq-decoupled problems for W}, it follows from \eqref{positivity of Upm} and Hopf's boundary point lemma that
\begin{equation}\label{normal derivatives at p}
\partial_\nu U^+(P) < 0 < \partial_\nu U^-(P),
\end{equation}
where we used the fact that $\nu$ is the outward unit normal vector to $\partial\Omega$ at $P$ as well as the inward unit normal vector to $\partial\Sigma$ at $P$.
Since reflection symmetry yields that
\begin{equation}\label{conclusion of reflection symmetry}
\partial_\nu\left(u^+(R_{t_*}x)\right)\big|_{x=P} = \partial_\nu u^+(R_{t_*}P)\ \mbox{ and }\ \partial_\nu\left(u^-(R_{t_*}x)\right)\big|_{x=P} = \partial_\nu u^-(R_{t_*}P),
\end{equation}
we have from \eqref{normal derivatives at p} that
\begin{equation}\label{pre conclusion}
\partial_\nu u^+(P) <  \partial_\nu u^+(R_{t_*}P)\ \mbox{ and }\ \partial_\nu u^-(P) >  \partial_\nu u^-(R_{t_*}P),
\end{equation}
which contradict the fact that $\partial_\nu u^+= \partial_\nu u^-$ on $\partial\Omega$ due to $u \in C^1(\sss^n)$.

\medskip
Let us proceed to the case (ii). We introduce the stereographic projection $\Pi: \sss^n \to \mathbb R^n = \{ (x,0) \in \mathbb R^{n+1}\, :\, x \in \mathbb R^n \}$ as in \cite[p. 1105]{BBF98}.
Then the Laplace-Beltrami operator $\Delta$ in $\sss^n$ is translated into $p^{-n}\mbox{div}(p^{n-2}\nabla \cdot)$ with $p = p(x) = \frac 2{1+|x|^2}$ for $x \in \mathbb R^n$.
With some rotation of $\sss^n$ we may assume that $\Pi(Q)=0,\ \Pi(H_{t_*})= \{ x_1=0 \},\ \Pi(M^-_{t_*})=\{ x_1>0\}$ and the boundary $\partial\Omega$ in a neighbourhood of $Q$  is represented by $x_n=\psi(\hat{x})$ where $\hat{x}=(x_1,\dots,x_{n-1}) \in \mathbb R^{n-1}$ in a neighborhood of the origin in $\mathbb R^n$ for some $C^{2,\alpha}$ function
$\psi : \mathbb R^{n-1} \to \mathbb R$ satisfying
\begin{equation}\label{facts obtained by rotation}
 \psi(0)=0, \nabla\psi(0)=0, \mbox{ and } \partial_\nu=\frac 1{p(x)\sqrt{1+|\nabla\psi|^2}}\left(-\sum_{j=1}^{n-1}\partial_{x_j}\psi\partial_{x_j} +\partial_{x_n}\right).
 \end{equation}
 Notice that, since $R_{t_*} x = (-x_1,x_2,\dots, x_n)$ for $x \in \mathbb R^n$, 
 \begin{align}
 &p(x)=p(R_{t_*} x), \label{from x1=0 1}\\
 &U^\pm(x)=u^\pm(x_1,x_2,\dots,x_{n}) - u^\pm(-x_1,x_2,\dots,x_{n}).\label{from x1=0 2}
 \end{align}
 Since event (ii) occurs, we observe that the function $\partial_{x_1}\psi(0, x_2, \dots, x_{n-1})$ achieves its local maximum $0$ at $(x_2,\dots,x_{n-1}) = 0 \in \mathbb R^{n-2}$, and hence 
 \begin{equation}\label{when n ge 3}
 \partial^2_{x_1 x_j}\psi(0) = 0\ \mbox{ for } j=2, \dots, n-1\ \mbox{ when } n \ge 3.
 \end{equation}
At every point $(\hat{x}, \psi(\hat{x}))$ in a neighborhood of the origin, we have three equalities:
\begin{equation}\label{three equalities}
u^\pm=0 \mbox{ and } -\!\sum_{k=1}^{n-1}\partial_{x_k}\psi\partial_{x_k}u^++\partial_{x_n}u^+= -\!\sum_{k=1}^{n-1}\partial_{x_k}\psi\partial_{x_k}u^-+\partial_{x_n}u^-.
\end{equation}
Differentiating the first two equalities of \eqref{three equalities} in $x_i$ for $i=1,\dots, n-1$ yields that at $(\hat{x}, \psi(\hat{x}))$
\begin{equation}\label{1st derivatives}
\partial_{x_i}u^\pm+\partial_{x_n}u^\pm\partial_{x_i}\psi =0,
\end{equation}
and differentiating this again in $x_j$ for $j=1,\dots, n-1$ yields that at $(\hat{x}, \psi(\hat{x}))$
\begin{equation}\label{2nd derivatives further}
\partial^2_{x_jx_i}u^\pm+\partial^2_{x_nx_i}u^\pm\partial_{x_j}\psi+\partial^2_{x_jx_n}u^\pm\partial_{x_i}\psi+\partial^2_{x_n}u^\pm\partial_{x_i}\psi\partial_{x_j}\psi+\partial_{x_n}u^\pm\partial^2_{x_jx_i}\psi =0.
\end{equation}
By letting $x=0$ in these equalities, we infer that
\begin{align}\label{vanishing of first and second derivatives}
&\partial_{x_i}u^\pm(0)=0 \mbox{ for } i=1,\dots,n-1,\\
&\partial^2_{x_1x_j}u^\pm(0) = 0 \mbox{ for } j=2,\dots,n-1\mbox{ when }n \ge 3,
\end{align}
where we used \eqref{when n ge 3} when $n\ge 3$. Then, differentiating the last equality of \eqref{three equalities} in $x_i$ for $i=1,\dots, n-1$ and letting $x=0$ give
\begin{equation}\label{further second derivatives}
\partial^2_{x_ix_n}u^+(0)=\partial^2_{x_ix_n} u^-(0)\ \mbox{ for } i=1,\dots, n-1.
\end{equation}
Thus we have from \eqref{from x1=0 2} and \eqref{vanishing of first and second derivatives} that
\begin{align}
U^\pm(0) = \partial_{x_i}U^\pm(0) = 0\  &\mbox{ for } i=1,\dots, n, \label{vanishing important1}\\
\partial^2_{x_n} U^\pm(0) = \partial^2_{x_1x_j}U^\pm(0) =0\  &\mbox{ for } j=1,\dots, n-1. \label{vanishing important2}
\end{align}
Since  $\mbox{ div}(p^{n-2}\nabla U^\pm) = 0$, by virtue of \eqref{positivity of Upm} and \eqref{vanishing important1}, we can apply Serrin's corner point lemma  (see \cite[Lemma S, p. 214]{GNN79} or \cite[Serrin's Corner Lemma, p. 393]{R97}) to $U^\pm$ and show that  letting $s_\pm=(-1,0,\dots,0,\mp 1) \in \mathbb R^n$ gives
\begin{equation}\label{Serrin's corner point lemma applies}
\partial^2_{s_+}U^+(0) > 0 \mbox{ and } \partial^2_{s_-}U^-(0) > 0,
\end{equation}
 where we used the fact that each of the directions $s_\pm$ enters $\Omega^{t_*}$ and $\Sigma$, respectively, transversally to both of the hypersurfaces $\partial\Omega$ and $H_{t_*}$ at the origin. Hence by \eqref{vanishing important2}
 \begin{equation}\label{second derivatives by upm}
 \partial^2_{s_\pm} U^\pm(0) = \pm 2\partial^2_{x_1x_n}U^\pm(0) = \pm4\partial^2_{x_1x_n}u^\pm(0).
 \end{equation}
 Consequently, combining \eqref{Serrin's corner point lemma applies} with \eqref{second derivatives by upm} yields that
 \begin{equation}\label{final contradiction}
 \partial^2_{x_1x_n}u^-(0) < 0 < \partial^2_{x_1x_n}u^+(0),
 \end{equation}
 which contradicts \eqref{further second derivatives} with $i=1$. 
 
 It remains to prove Theorem \ref{t_main_2}. The proof follows basically along the same lines. We may first replace the number $\pi$ with $+\infty$ in defining $M_t^-$. In the case (ii), we may start with  $\mathbb H^n$, the Poincar\'e ball model given by the unit ball $\mathbb B^n=\{ x \in \mathbb R^n\, :\, |x| < 1 \}$ endowed with the metric  $g= p^2(x)|dx|^2$ and  the Laplace-Beltrami operator $\Delta=p^{-n}\mbox{div}(p^{n-2}\nabla \cdot)$ with $p(x) = \frac 2{1-|x|^2}$ for $x \in \mathbb B^n$ (see \cite[p. 1105]{BBF98}). That is, we may replace $\frac 2{1+|x|^2}, \mathbb R^n$ with $\frac 2{1-|x|^2}, \mathbb B^n$, respectively.


\section{Proof of Theorem \ref{t_main}}\label{Sec-t-main}

Let $\Omega\subset \sss^2$ be a connected and simply connected domain of class $C^2$, and let 
$u$ be the solution to the overdetermined problem \eqref{Eq-main}.

\subsection{Classes of candidate solutions}\label{candidates solutions for our problem}
If $\Omega=B_R(o)$, where $R\in (0,\pi)$ and $o\in \sss^2$ are fixed, one has that $\gamma:=\frac{|\Omega^+|}{|\Omega^-|}=\frac{1-\cos(R)}{1+\cos(R)}$ and the solution $v$ to \eqref{Eq-main} is radial, i.e., it only depends on the distance $r(\cdot):=d^{\sss^2}(o,\cdot)$ from $o$:
\begin{align}
v^+(x)=\widehat{v}^+(r(x)) \quad \andd \quad v^-(x)=\gamma \widehat{v}^-(r(x)),
\end{align}
where
\begin{align}
\widehat{v}^+(r):=\ln\left(\frac{1+\cos(r)}{1+\cos(R)}\right) \quad \andd \quad \widehat{v}^-(r):=\ln\left(\frac{1-\cos(R)}{1-\cos(r)}\right).
\end{align}
In polar coordinates $(r,\theta)$ centred at $o$
\begin{align}\label{Eq_gradient_v}
\nabla v^+=-\frac{\sin(r)}{1+\cos(r)}\partial_r \quad \andd \quad \nabla v^-=-\gamma\frac{\sin(r)}{1-\cos(r)}\partial_r.
\end{align}

Following the notation of \cite{EM19}, we denote by $\mathcal{C}$ the following family of couples of $C^3$ functions
\begin{align}
\mathcal{C}:=\{V:=(v^+,v^-)=(v^+_{p_+,w_+,a_+},v^-_{p_-,w_-,a_-})\}_{(p_\pm,w_\pm,a_\pm)\in T\sss^2\times \rr^\pm}
\end{align}
solving respectively
\begin{align}
\begin{cases}
-\Delta v^+=1 & \inn B_{R^+}(q^+),\\
-\Delta v^-=-\gamma & \inn B_{R^-}(q^-),\\
v^\pm=0 & \onn \partial B_{R^\pm}(q^\pm),\\
p_\pm\in \overline{B_{R^\pm}(q^\pm)},\\
v^\pm(p_\pm)=a_\pm,\\
\nabla v^\pm(p_\pm)=w_\pm
\end{cases}
\end{align}
for some $R^\pm=R^\pm(p_\pm,w_\pm,a_\pm)>0$ and $q^\pm=q^\pm(p_\pm,w_\pm,a_\pm)\in \sss^2$.
Note that each $v^\pm_{p_\pm,w_\pm,a_\pm}$ is radial, in the sense that it only depends on the distance from the point $q^\pm_{p_\pm,w_\pm,a_\pm}$. To simplify notation, the subscript $(p_\pm, w_\pm, a_\pm)$ is suppressed unless necessary for clarity. The existence of such functions is obtained in \cite[Section 3]{EM19}.


For $\gamma>0$, we denote
\begin{align}
\widetilde{\mathcal{C}}:=&\Big\{V\in \mathcal{C}\ :\ |B_{R^+}(q^+)|=\gamma |B_{R^-}(q^-)| \quad \andd \quad q^\pm \ \textnormal{are antipodal}\Big\}\\
=&\Bigg\{V\in \mathcal{C}\ :\ R^+=\cos^{-1}\left(\frac{1-\gamma}{1+\gamma}\right), \ R^-=\pi-R^+\ \andd \ q^\pm \ \textnormal{are antipodal}\Bigg\}.
\end{align}
Hence, for any $V\in \widetilde{\mathcal{C}}$ the geodesic balls $B_{R^\pm}(q^\pm)$ have common boundary $\Sigma$ and
\begin{align}
B_{R^-}(q^-)\sqcup B_{R^+}(q^+) \sqcup \Sigma = \sss^2.
\end{align}

For any $x\in \sss^2$ let $Q$ be the following symmetric bilinear form
\begin{align}\label{Eq-Q-def}
Q_x:=\chi_{\overline{\Omega^+}}(x) Q^+_x + \chi_{\Omega^-}(x)Q^-_x,
\end{align}
where
\begin{align}
&Q^+_x:=\nabla^2 u^+(x) - \nabla^2 v^+_{x,\nabla u^+(x), u^+(x)}(x),\\
&Q^-_x:=\nabla^2 u^-(x) - \nabla^2 v^-_{x,\nabla u^-(x), u^-(x)}(x).
\end{align}
In what follows, when there is no risk of confusion, we simply denote 
\begin{align}
v^\pm_x:=v^\pm_{x,\nabla u^\pm (x), u^\pm (x)}.
\end{align}

\subsection{Proof of Theorem \ref{t_main}}

Before proving Theorem \ref{t_main} we need to prove some preliminary properties satisfied by the form $Q$. The first step is the next lemma.
\begin{lemma}\label{l_Q_continuity}
The form $Q$ is continuous in $\sss^2$.
\end{lemma}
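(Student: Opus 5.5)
Continuity of $Q$ splits into two parts: continuity inside $\Omega^+$ and $\Omega^-$, and continuity across the interface $\partial\Omega$. Inside each phase, $u^\pm$ is smooth by elliptic regularity. The comparison function $v^\pm_x$ is determined by the data $(x,\nabla u^\pm(x),u^\pm(x))$. By the construction recalled from \cite[Section 3]{EM19}, the parameters $(R^\pm,q^\pm)$, and hence $\nabla^2 v^\pm_x(x)$, depend continuously on these data wherever they are defined. So $Q^\pm$ is continuous on $\Omega^\pm$, and $Q^+$ is continuous up to $\partial\Omega$ from inside $\overline{\Omega^+}$, because $u^+\in C^2(\overline{\Omega^+})$. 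This reduces the lemma to showing that for $x_0\in\partial\Omega$ and $x\to x_0$ with $x\in\Omega^-$, $Q^-_x\to Q^+_{x_0}$.

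The key step is an explicit computation at the interface. Take $x_0\in\partial\Omega$. Then $u^\pm(x_0)=0$, and by \eqref{Eq-compatibility} $\nabla u^+(x_0)=\nabla u^-(x_0)=:w$. By Hopf's lemma and \eqref{positivity of upm}, $w=-|w|\nu$ with $|w|>0$. Write $\nabla^2u^\pm(x_0)$ in an orthonormal frame $(\tau,\nu)$. Differentiating $u^\pm=0$ twice along $\partial\Omega$ gives $\nabla^2u^\pm(\tau,\tau)=-\kappa\,\partial_\nu u^\pm=\kappa|w|$, where $\kappa$ is the geodesic curvature. This is the same for both phases. Differentiating the compatibility condition tangentially gives $\nabla^2u^+(\tau,\nu)=\nabla^2u^-(\tau,\nu)$. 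The equations give $\nabla^2u^+(\nu,\nu)=-1-\kappa|w|$ and $\nabla^2u^-(\nu,\nu)=\gamma-\kappa|w|$. Thus
\[
\nabla^2u^+(x_0)-\nabla^2u^-(x_0)=-(1+\gamma)\,\nu\otimes\nu .
\]

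Next I do the analogous computation for the radial comparison functions. Here $v^\pm_{x_0}$ vanishes at $x_0$ with gradient $w$, so $x_0\in\partial B_{R^\pm}(q^\pm)$. Both balls have the common normal $\nu$ at $x_0$, with $q^\pm$ on the inner side of their own ball. For a radial function $\phi(r)$ vanishing at $r=R$, the Hessian at the boundary in the frame $(\tau,\partial_r)$ is $\operatorname{diag}(\phi'\cot R,\phi'')$ with $\phi''+\phi'\cot R=\mp\,(\text{source})$. Matching $\phi'$ to $|w|$ determines $R^\pm$, and I must check that the tangential entries agree. For this I will use the explicit profiles \eqref{Eq_gradient_v}. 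I expect $B_{R^-}(q^-)$ to be the ball of the same circle, lying on the $\Omega^-$-side, with the sign of $\gamma$ in the source. If so, $\nabla^2v^+_{x_0}-\nabla^2v^-_{x_0}=-(1+\gamma)\nu\otimes\nu$ at $x_0$, and the two jumps cancel, giving $Q^+_{x_0}=\lim Q^-_{x}$.

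The main obstacle is exactly this last point. The tangential curvature term of $v^\pm$ is $\phi'(R)\cot R$, and the two balls must produce the same value $\kappa_{v}|w|$. This is not automatic. It is where the specific normalisation of $v^-$ (source $-\gamma$, domain on the opposite side) and the definition of $\mathcal C$ have to be checked carefully. I would first try to show that $v^-_{x_0}$ extends to, or coincides with, the negative continuation of $v^+_{x_0}$ across the common circle, as in the model case of $\widetilde{\mathcal C}$. Failing that, I would compute both tangential entries from the ODE and compare them directly.
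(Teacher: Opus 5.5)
\textbf{There is a genuine gap, and it cannot be closed.} Your reduction to the interface is fine, and so is your jump $\nabla^2u^+(x_0)-\nabla^2u^-(x_0)=-(1+\gamma)\,\nu\otimes\nu$. The step you defer, however, is not merely unverified: it is false in general. Carry out the computation with data $a=0$ and $w=\nabla u(x_0)$, $|w|>0$:
\begin{itemize}
\item $v^+_{x_0}=\ln\frac{1+\cos r}{1+\cos R^+}$ with $\tan(R^+/2)=|w|$;
\item $v^-_{x_0}=-\gamma\ln\frac{1+\cos\rho}{1+\cos R^-}$, where $\rho=d^{\sss^2}(q^-,\cdot)$, with $\gamma\tan(R^-/2)=|w|$.
\end{itemize}
The two circles coincide iff $R^++R^-=\pi$, i.e.\ iff $\tan(R^+/2)\tan(R^-/2)=|w|^2/\gamma=1$. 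Using $\phi'(R)\cot R$ for the tangential entries,
\[
\nabla^2v^+_{x_0}(\tau,\tau)=-\tfrac12\bigl(1-|w|^2\bigr),\qquad \nabla^2v^-_{x_0}(\tau,\tau)=\tfrac12\bigl(\gamma-|w|^2/\gamma\bigr).
\]
Combined with your jump of $\nabla^2u$ and tracelessness, this gives
\[
Q^+_{x_0}-\lim_{\Omega^-\ni x\to x_0}Q^-_x=\frac{(1+\gamma)(\gamma-|w|^2)}{2\gamma}\,\bigl(\tau\otimes\tau-\nu\otimes\nu\bigr).
\]
So your first attempt (identifying $v^-_{x_0}$ with the continuation of $v^+_{x_0}$) and your fallback (comparing the ODE values) both produce a nonzero jump unless $|\nabla u(x_0)|^2=\gamma$. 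Nothing in the pointwise data at $x_0$ constrains $|w|$, so no computation at the interface can force this. In fact, continuity of $Q$ on $\partial\Omega$ is equivalent to $|\nabla u|^2\equiv\gamma$ there. That would already make $u^+$ a Serrin solution on the simply connected $\Omega^+$, and the theorem would then follow from \cite{EM19}. So the lemma carries essentially the full strength of the theorem.

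\textbf{Comparison with the paper.} The paper treats the $(\tau,\eta)$ entry exactly as you do, and that step is correct. For the $(\tau,\tau)$ entry it asserts $\nabla^2v^\pm_x(x)(\tau,\tau)=0$ on the grounds that $v^\pm$ are radial, and then obtains the $(\eta,\eta)$ entry from tracelessness. That assertion drops exactly the term you singled out. For a radial $\phi(r)$ on $\sss^2$ the tangential entry is
\[
\phi'(r)\cot r=-\langle\nabla v,\nabla_\tau\tau\rangle
\]
along the level circle. This is the analogue of the term $\nabla u^\pm\cdot\nabla_\tau\tau$ that the paper keeps for $u^\pm$, and it vanishes only when $R^\pm=\pi/2$. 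So the paper's argument does not supply the missing step either. Your skepticism at that point is justified: proving the lemma with this $Q$ would need a genuinely new ingredient, not a more careful version of the interface computation.
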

\begin{proof}
If $x$ is in $\Omega^+$ or in $\Omega^-$, then the continuity follows as in \cite{EM19}. Hence, fix $x\in \partial \Omega$ and denote by $\tau$ and $\eta$ the tangent vector and the (outward) unit normal vector to $\partial \Omega$ respectively. Observing that $Q^-$ continuously extends to $\partial \Omega$, we only have to prove that $Q^+$ and $Q^-$ are equal at $x$. Indeed,
\begin{itemize}
\item \underline{$Q^+_x(\tau,\eta)=Q^-_x(\tau,\eta)$}: since $v^+$ and $v^-$ only depend on the distance $r^\pm(\cdot)=d^{\sss^2}(\cdot, q^\pm)$ from $q^\pm$ respectively, and since $\nabla r^\pm (x)\parallel \eta(x)$, then
\begin{align}\label{l_cont_app1}
\nabla^2 v^+(x)(\tau(x),\eta(x))=\nabla^2 v^-(x)(\tau(x),\eta(x))=0.
\end{align}
Moreover, since $\nabla u^+=\nabla u^-$ on $\partial \Omega$, it follows that along $\partial \Omega$
\begin{align}
\nabla u^+ \cdot \eta=\nabla u^- \cdot \eta
\end{align}
and hence
\begin{align}
\tau(\nabla u^+ \cdot \eta)=\tau(\nabla u^- \cdot \eta)
\end{align}
implying
\begin{align}
\nabla^2 u^+(\tau,\eta)+\nabla u^+ \cdot \nabla_\tau \eta=\nabla^2 u^-(\tau,\eta)+\nabla u^- \cdot \nabla_\tau \eta.
\end{align}
Since $u^+\equiv u^-\equiv 0$ at $\partial \Omega$ and $\nabla_\tau \eta \parallel \tau$, we get
\begin{align}
\nabla^2 u^+(\tau,\eta)=\nabla^2 u^-(\tau,\eta).
\end{align}
This latter equality, together with \eqref{l_cont_app1}, implies
\begin{align}
Q_x^+(\tau,\eta)=Q^-_x(\tau,\eta).
\end{align}

\item \underline{$Q^+_x(\tau,\tau)=Q^-_x(\tau,\tau)$}: since $\nabla u^\pm \bot \tau$ along $\partial \Omega$
\begin{align}
\nabla u^+ \cdot \tau=0=\nabla u^- \cdot \tau
\end{align}
and hence
\begin{align}
\tau(\nabla u^+ \cdot \tau)=\tau(\nabla u^- \cdot \tau).
\end{align}
Namely, 
\begin{align}
\nabla^2 u^+(\tau,\tau)+\nabla u^+\cdot \nabla_\tau \tau=\nabla^2 u^-(\tau,\tau)+\nabla u^-\cdot \nabla_\tau \tau.
\end{align}
Hence, using $\nabla u^+=\nabla u^-$ we have
\begin{align}
\nabla^2 u^+(\tau,\tau)=\nabla^2 u^-(\tau,\tau).
\end{align}
This, together with the fact that $v^\pm$ only depend on $r^\pm$ respectively, and hence
\begin{align}
\nabla^2 v^\pm_x(x)(\tau(x),\tau(x))=0,
\end{align}
which implies that
\begin{align}
Q^+_x(\tau,\tau)=\nabla^2 u^+(\tau,\tau)=\nabla^2 u^-(\tau,\tau)=Q^-_x(\tau,\tau).
\end{align}

\item \underline{$Q^+_x(\eta,\eta)=Q^-_x(\eta,\eta)$}: we start observing that for every $y\in \sss^2$ one has
\begin{align}
\tr{Q_y}=&\chi_{\overline{\Omega^+}}(y)\left(\Delta u^+(y)-\Delta v^+_y(y)\right)\\
&+\chi_{\Omega^-}(y)\left(\Delta u^-(y)- \Delta v^-_y(y)\right)\\
=& \chi_{\overline{\Omega^+}}(y)(-1+1)+\chi_{\Omega^-}(y)(\gamma-\gamma)\\
=&0
\end{align}
and hence $\tr{Q^\pm}=0$. This implies that
\begin{align}
Q^\pm_x(\eta,\eta)=-Q^\pm_x(\tau,\tau)
\end{align}
and hence, by previous point,
\begin{align}
Q^+_x(\eta,\eta)=-Q^+_x(\tau,\tau)=-Q^-_x(\tau,\tau)=Q^-_x(\eta,\eta).
\end{align}
\end{itemize}
It follows that $Q^+$ and $Q^-$ coincide along $\partial \Omega$ and hence $Q$ is continuous.
\end{proof}
\begin{remark}\label{Rmk-Continuity-Q}
We stress that in the previous proof we only used the compatibility condition \eqref{Eq-compatibility} and the radiality of the candidate solutions. Hence, an analogue of Lemma \ref{l_Q_continuity} can be proved for the other overdetermined problems \eqref{Eq-general} and \eqref{Eq-eigenvalue} as soon as these properties are satisfied, together with the condition that for every $x\in \sss^2$ there exists a candidate solution at $x$, that is, $v^\pm_{x,\nabla u^\pm (x), u^\pm(x)}\in \mathcal{C}$.
\end{remark}

Fix a local conformal parametrization $\psi:U\to V$ of a domain $V\Subset \sss^2$, where $U\subset \mathbb{C}$. Without loss of generality, we can assume $0\in U$. Denote $V^\pm:=V\cap \Omega^\pm$, parametrized respectively by two disjoint open sets $U^\pm\subset U$ where $U^\pm=\{ z \in U\, :\, \pm\mbox{ Im}(z) > 0 \}$ and  $\overline{U}=\overline{U^+}\cup U^-$ . Let $\mathfrak{P}$ denote the $(2,0)$ part of $Q|_U$: since $Q$ is symmetric, then
\begin{align}
Q=\mathfrak{P}+\overline{\mathfrak{P}}.
\end{align}
Moreover, if $z$ denotes the local conformal parameter,
\begin{align}
\mathfrak{P}=P(z) \dint{z}^2
\end{align}
for $P(z)=Q\left(\partial_z,\partial_z\right)$. Similarly, one can define the $(2,0)$ parts $\mathfrak{P}^+$ and $\mathfrak{P}^-$ of $Q^+|_{\overline{U^+}}$ and $Q^-|_{U^-}$ respectively, obtaining that in the local conformal coordinate $z$
\begin{align}
\mathfrak{P}^+=P^+(z) \dint{z}^2 \quad \andd \quad \mathfrak{P}^-=P^-(z) \dint{z}^2
\end{align}
and hence that
\begin{align}
\mathfrak{P}=\chi_{\overline{U^+}} \mathfrak{P}^+ +\chi_{U^-} \mathfrak{P}^- 
\end{align}
or, equivalently,
\begin{align}
P=\chi_{\overline{U^+}} P^+ +\chi_{U^-} P^-
\end{align}
which, by the continuity of $Q$, is continuous.

\begin{remark}\label{rmk_P}
By \cite[Claim C]{EM19} applied to $Q^+$ and $Q^-$ as defined on $\Omega^+$ and $\Omega^-$ respectively, there exist two complex functions $\beta^+$ and $\beta^-$ so that
\begin{align}
\partial_{\overline{z}} P^+(z)=\beta^+(z) \overline{P^+}(z) \quad \andd \quad \partial_{\overline{z}} P^-(z)=\beta^-(z) \overline{P^-}(z).
\end{align}
Set
\begin{align}
\beta:=\chi_{U^+}\beta^+ + \chi_{U^-} \beta^-.
\end{align}
Then the previous identities imply that for every $U'\Subset U$
\begin{align}
|\partial_{\overline{z}} P(z)|&= |\beta(z)| |P(z)|\\
&\leq C |P(z)|\quad \textnormal{almost everywhere in } U',
\end{align}
where $C:=\sup_{\overline{U'}} |\beta|$. Up to change the notation, we can suppose $U'=U$.
\end{remark}

Next lemma describes the structure of $P$ in a neighbourhood of any of its isolated zeros.
\begin{lemma}\label{l_isolated_singularities}
There exists a holomorphic function $h$ and a H\"older continuous function $s$ so that $P(z)=e^{is(z)}h(z)$ in $U$.
\end{lemma}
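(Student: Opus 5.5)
We would prove this with the classical similarity principle of Bers–Vekua for generalized analytic functions, applied to the continuous function $P$ on $U$. By Remark \ref{rmk_P}, $P$ is continuous on $U$ and satisfies $|\partial_{\overline z}P|\le C|P|$ almost everywhere. We first upgrade this inequality to an equation. Define
\begin{align}
b(z):=\begin{cases} \dfrac{\partial_{\overline z}P(z)}{P(z)} & \text{if } P(z)\neq 0,\\ 0 & \text{if } P(z)=0,\end{cases}
\end{align}
so that $|b|\le C$ a.e.\ and $\partial_{\overline z}P=bP$ holds in $U$. For this to be meaningful we need $P\in W^{1,p}_{loc}(U)$ for some $p>2$, with distributional derivative equal to the pointwise one on $U^+\cup U^-$. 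This is the main obstacle. Inside each phase, $P^\pm$ is $C^1$ (as in \cite{EM19}), and up to the interface the functions $u^\pm$ are $C^{2,\alpha}$ up to $\partial\Omega$. Hence we will show that $P^+$ and $P^-$ extend as $C^{0,\alpha}$ and $W^{1,p}$ functions to $\overline{U^\pm}$; this requires the radial candidate Hessians $\nabla^2 v^\pm_x$ to depend smoothly on the data $(x,\nabla u^\pm(x),u^\pm(x))$, which holds on a neighbourhood where $\nabla u\neq 0$ by the construction in \cite[Section 3]{EM19}. Since $P$ is continuous across the interface $\{\mathrm{Im}\,z=0\}$ by Lemma \ref{l_Q_continuity}, the piecewise $W^{1,p}$ function $P$ has no jump term in its distributional derivative. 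Therefore $P\in W^{1,p}_{loc}(U)$, and $\partial_{\overline z}P$ coincides with the a.e.\ derivative, so $\partial_{\overline z}P=bP$ in the sense of distributions with $b\in L^\infty(U)$.

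We then apply the similarity principle itself. Shrinking $U$ if necessary, we set
\begin{align}
\omega(z):=-\frac{1}{\pi}\int_U \frac{b(\zeta)}{\zeta-z}\,\dint{A(\zeta)},
\end{align}
the Cauchy transform of $b$, so that $\partial_{\overline z}\omega=b$ in $U$. Since $b\in L^\infty$, the standard estimates for the Cauchy transform give $\omega\in W^{1,p}_{loc}$ for every $p<\infty$, and hence $\omega$ is Hölder continuous of every exponent $<1$. Put $h:=e^{-\omega}P$. Then
\begin{align}
\partial_{\overline z}h=e^{-\omega}\left(\partial_{\overline z}P-bP\right)=0
\end{align}
in the distributional sense, with $h\in W^{1,p}_{loc}$. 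By Weyl's lemma, $h$ is holomorphic in $U$.

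Finally we put the factor $e^{\omega}$ in the stated form. We have $P=e^{\omega}h$. Writing $\omega=\mathrm{Re}\,\omega+i\,\mathrm{Im}\,\omega$, the factor $e^{\mathrm{Re}\,\omega}$ is a positive Hölder function. To match the statement $P=e^{is}h$ exactly, we take $s:=\mathrm{Im}\,\omega-i\,\mathrm{Re}\,\omega$, a complex-valued Hölder continuous function, so that $e^{is}=e^{\omega}$. Alternatively, one can note that only the zero structure and the index of $P$ matter later, and $e^{\mathrm{Re}\,\omega}>0$ does not affect these. Either way we obtain $P=e^{is}h$ with $h$ holomorphic and $s$ Hölder continuous.

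As a consequence, the zeros of $P$ are either isolated with a positive finite multiplicity, or $P\equiv 0$ on $U$. This is what the Poincaré–Hopf argument of \cite{EM19} requires.
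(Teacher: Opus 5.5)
Your proof works, but it crosses the interface by a different mechanism than the paper, and one intermediate claim is unjustified (and unnecessary).

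\textbf{The two routes.} The paper applies the similarity principle only inside each phase. It takes the Cauchy transform $s$ of $\sigma=\partial_{\overline z}P/P$ and checks $\partial_{\overline z}s=\sigma$ in $U^+\sqcup U^-$. Via \cite[Lemma 2.7.1]{Jo91} it gets $h=Pe^{-s}$ holomorphic in $U^+$ and in $U^-$ separately. It then removes the line $\{\mathrm{Im}\,z=0\}$ by Morera's theorem, which uses only the continuity of $h$ across it. You instead prove $\partial_{\overline z}P=bP$ distributionally on all of $U$, run the Bers--Vekua principle once globally, and finish with Weyl's lemma.

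Both routes rest on the same fact: continuity of $P$ makes the contributions from the two sides of the interface cancel (in the paper, the horizontal sides of $R^\pm_\varepsilon$). You phrase it as a statement about $\partial_{\overline z}P$; the paper phrases it as removability of a line for $h$. Your version treats zeros of $P$ and the interface uniformly; setting $b:=0$ on $\{P=0\}$ also avoids the paper's division by $P$ at its zeros. The paper's version never needs derivatives of $P$ across the interface. Your choice $s:=\mathrm{Im}\,\omega-i\,\mathrm{Re}\,\omega$ correctly reconciles the stated $e^{is}$ with what the construction actually gives; the paper's own proof yields $P=e^{s}h$.

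\textbf{The unjustified step.} You claim $P^\pm$ extend as $W^{1,p}$ functions, $p>2$, to $\overline{U^\pm}$ because $u^\pm\in C^{2,\alpha}$ up to $\partial\Omega$. This does not follow. The full gradient of $P^\pm$ involves $\nabla^3u^\pm$, and $C^{2,\alpha}$ regularity only gives $|\nabla^3u^\pm|\lesssim d(\cdot,\partial\Omega)^{\alpha-1}$. That lies in $L^p$ only for $p<1/(1-\alpha)$, so you would need extra boundary regularity, e.g.\ analyticity of the free boundary, which you do not invoke.

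You only need $\partial_{\overline z}P$, which is much weaker. $P$ is $C^1$ in each open phase and continuous on $U$, and $|\partial_{\overline z}P|\le C|P|$ is locally bounded by Remark~\ref{rmk_P}. For a test function $\phi$, apply Green's formula to $P\phi$ on $\{\pm\mathrm{Im}\,z>\varepsilon\}$:
\begin{itemize}
\item the two line integrals over $\{\mathrm{Im}\,z=\pm\varepsilon\}$ have opposite orientations and cancel as $\varepsilon\to0$, by continuity of $P$;
\item the area integrals converge by dominated convergence.
\end{itemize}
Hence the distributional $\partial_{\overline z}P$ on $U$ is the bounded function $bP$, with no singular part on the interface.

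The product rule $\partial_{\overline z}(e^{-\omega}P)=e^{-\omega}(\partial_{\overline z}P-bP)$ then follows by mollifying $P$. This uses locally uniform convergence of $P_\delta$, $L^q_{loc}$ convergence of $\partial_{\overline z}P_\delta$, and $e^{-\omega}\in W^{1,p}_{loc}\cap C^0$. Weyl's lemma then applies to the continuous function $h$ with $\partial_{\overline z}h=0$, so you never need $h\in W^{1,p}_{loc}$.
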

\begin{proof}
Let us define
\begin{align}
\sigma(z):=\begin{cases}
\frac{\partial_{\overline{z}}P(z)}{P(z)} & \inn U\setminus\psi^{-1}\left(V\cap \partial\Omega\right),\\ 0 & \inn U\cap \psi^{-1}\left(V\cap \partial \Omega\right).
\end{cases}
\end{align}
By Remark \ref{rmk_P}
\begin{align}
|\sigma|\leq C \quad \inn U
\end{align}
and hence, for $\zeta=\xi+i \eta$, the following integral function is well defined
\begin{align}
s(z):=-\frac{1}{\pi} \int_{B_R(0)} \sigma(\xi) \left(\frac{1}{\zeta-z}-\frac{1}{\zeta}\right)\ \dint{\xi}\, \dint{\eta},
\end{align}
where $R>0$ is small enough so that $B_R(0)\subset U$. In particular, observe that
\begin{align}
\partial_{\overline{z}}s=\sigma \quad \inn U^+\sqcup U^-.
\end{align}
Indeed, for $z\in U^+$ fixed, $\e>0$ small enough and $z_0\in U^+$ so that $z\in B_{\e/2}(z_0)$ and $B_\e(z_0)\subset U^+$, we have
\begin{align}
s(z)=&-\frac{1}{\pi} \int_{B_R(0)\setminus B_\e(z_0)}\sigma(\xi) \left(\frac{1}{\zeta-z}-\frac{1}{\zeta}\right)\ \dint{\xi}\, \dint{\eta}\\
& - \frac{1}{\pi}\int_{B_\e(z_0)} \sigma(\xi) \left(\frac{1}{\zeta-z}-\frac{1}{\zeta}\right)\ \dint{\xi}\, \dint{\eta}
\end{align}
which implies that
\begin{align}
\partial_{\overline{z}}s(z)=&-\frac{1}{\pi}\underbrace{\partial_{\overline{z}}\left(\int_{B_R(0)\setminus B_\e(z_0)}\sigma(\xi) \left(\frac{1}{\zeta-z}-\frac{1}{\zeta}\right)\ \dint{\xi}\, \dint{\eta}\right)}_{=0}\\
&-\frac{1}{\pi}\partial_{\overline{z}}\left(\int_{B_\e(z_0)} \sigma(\xi) \left(\frac{1}{\zeta-z}-\frac{1}{\zeta}\right)\ \dint{\xi}\, \dint{\eta}\right)\\
=&-\frac{1}{\pi}\partial_{\overline{z}}\left(\int_{B_\e(z_0)} \sigma(\xi) \left(\frac{1}{\zeta-z}-\frac{1}{\zeta}\right)\ \dint{\xi}\, \dint{\eta}\right)\\
=&\sigma(z).
\end{align}
Similarly,  $\partial_{\overline z}s=\sigma$ for $z\in U^-$. Arguing as in \cite[Lemma 2.7.1]{Jo91}, we obtain that the function
\begin{align}
h:=Pe^{-s}
\end{align}
is holomorphic in $U^+\sqcup U^-$. Moreover, by the definition of $\sigma$ and by Remark \ref{rmk_P}, $h$ is continuous in $U$. It remains to prove that $h$ is holomorphic in the whole $U$. To this aim, thanks to Morera's theorem, we only have to prove that the line integral of $h$ on any rectangle $R\subset \Omega$ is zero. 
Hence, let $R$ be a rectangle intersecting $\partial \Omega \cap U$: $R=\partial\left([a,b]\times [-c,d]\right)$ for $-\infty<a<b<+\infty$ and $c,d>0$. For $\e>0$ small enough, denote the rectangles
\begin{align}
& R^+_\e:= \partial\left([a, b]\times [\e,d] \right)\subset U^+,\\
&R^-_\e:=\partial \left([a,b] \times [-c,-\e]\right) \subset U^-
\end{align}
so that
\begin{itemize}
\item $R^\pm_\e$ converges uniformly to $R^\pm_0$;
\item $R^\pm_\e$ are parametrized counter-clockwise.
\end{itemize}
Then, since $h$ is holomorphic in $U^+$ and $U^-$ respectively, we have
\begin{align}
\int_{R^\pm_\e} h =0.
\end{align}
Hence, by the continuity of $h$,
\begin{align}
0=\int_{R_\e^+} h + \int_{R_\e^-} h \xrightarrow{\e \to 0} \int_R h,
\end{align}
which implies 
\begin{align}
\int_R h =0
\end{align}
and therefore the claim holds.
\end{proof}

Next, we prove that either $Q$ vanishes identically or has isolated zeroes. In this latter case, the zeroes of $Q$ are singular points of negative index of some continuous vector fields associated to $Q$.
\begin{proposition}\label{p_zeroesQ}
One (and only one) of the following holds
\begin{itemize}
\item $Q\equiv 0$ on $\sss^2$; 
\item $Q$ has isolated zeroes and the null directions of $Q$ determine two continuous line fields with isolated singularities of negative index.
\end{itemize}
\end{proposition}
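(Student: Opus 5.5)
We rely on the local representation from Lemma \ref{l_isolated_singularities}. In any conformal chart $\psi:U\to V$ we write $P=e^{is}h$, with $h$ holomorphic and $s$ H\"older continuous. This gives a clean dichotomy. Either $h\equiv 0$ on the chart, or the zeroes of $h$, and hence of $P$, are isolated. Near such a zero $z_0$ we then have $h(z)=(z-z_0)^k g(z)$ with $k\ge 1$ and $g(z_0)\neq 0$.

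\textbf{Step 1: the dichotomy is global.} Let $Z$ be the set of points of $\sss^2$ having a neighbourhood on which $Q\equiv 0$. By construction $Z$ is open. To see that it is closed, take a point $x$ in the closure of $Z$. In a chart around $x$, the function $P$ vanishes on a nonempty open set, so $h$ does too. Since $U$ can be taken connected, the identity principle gives $h\equiv 0$ on $U$, hence $P\equiv 0$ there. Because $Q$ is real and trace free (shown in the proof of Lemma \ref{l_Q_continuity}), we have $Q=\mathfrak P+\overline{\mathfrak P}$, so $Q\equiv 0$ near $x$, i.e.\ $x\in Z$. As $\sss^2$ is connected, either $Z=\sss^2$, which is the first alternative, or $Z=\emptyset$. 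In the latter case every chart has $h\not\equiv 0$, so the zeroes of $Q$ (which coincide with the zeroes of $P$) are isolated. The two alternatives are mutually exclusive because $Q\equiv 0$ has no isolated zeroes.

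\textbf{Step 2: line fields and their index.} Assume $Q\not\equiv 0$. At a point where $P\neq 0$, $Q$ is a nonzero traceless symmetric form. In the conformal coordinate, $Q(v,v)=2\,\mathrm{Re}(P\,v^2)$ up to the positive conformal factor. Hence the null directions are the two orthogonal lines $\{v : P v^2\in i\rr\}$, namely $\arg v=\tfrac{\pi}{4}-\tfrac{\arg P}{2}+\tfrac{m\pi}{2}$. These depend continuously on $\arg P$ wherever $P\neq0$, since $P$ is continuous by Lemma \ref{l_Q_continuity}, including across $\partial\Omega$. They are also chart independent, because $Q$ is a global object. We therefore obtain two continuous line fields on $\sss^2$ minus the isolated zeroes.

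To compute the index at a zero $z_0$, write $\arg P=s+k\arg(z-z_0)+\arg g$. Along a small circle around $z_0$, the terms $s$ and $\arg g$ are continuous and single valued, so they contribute no winding. Thus $\arg P$ increases by $2\pi k$, and the direction $\arg v$ rotates by $-\pi k$. The index of each line field is therefore $-k/2<0$.

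\textbf{Main obstacle.} The delicate point is already settled by Lemma \ref{l_isolated_singularities}: across $\partial\Omega$, $P$ is only continuous and satisfies a $\bar\partial$-inequality merely almost everywhere, so the factorization is essential. The remaining care is in Step 2. One must check that the winding of $\arg P$ is exactly $2\pi k$ despite $s$ being only H\"older continuous, which holds because $e^{is}$ is continuous and single valued. One must also confirm that the charts can be taken connected and that the conformal factor does not affect null directions.
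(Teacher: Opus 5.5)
Your proposal is correct and follows the paper's route. Lemma \ref{l_isolated_singularities} gives, locally, either $P\equiv 0$ or $P=z^kG$ with $G$ continuous and $G(0)\neq 0$, and the index $-k/2$ of the null line fields is computed exactly as in Remark \ref{rmk_index}. Your Step 1, the open--closed argument using the identity principle and the connectedness of $\sss^2$, makes explicit the passage from local vanishing to $Q\equiv 0$ on all of $\sss^2$, which the paper's proof leaves implicit.
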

\begin{proof}
Let $x_0$ be a zero of $Q$ and consider local conformal coordinates centred at $x_0$. By Remark \ref{rmk_P} and Lemma \ref{l_isolated_singularities}, either $P$ vanishes in a neighbourhood of $z=0$ or $P(z)=z^k G(z)$ for a certain $k\in \mathbb{N}$ and for $G$ continuous so that $G(0)\neq 0$, i.e. $0$ is an isolated zero for $P$ (equivalently, $x_0$ is an isolated zero for $Q$) of negative index $-\frac{k}{2}$ (see Remark \ref{rmk_index} below).
\end{proof}
\begin{remark}\label{rmk_index}
Let $Q$ be the symmetric bilinear form defined, in local conformal coordinates, as
\begin{align}
Q=P\ \dint{z}^2 + \overline{P}\ \dint{\overline{z}}^2=2\textnormal{Re}(P\ \dint{z}^2).
\end{align}
A null direction of $Q$ at $p$ is a tangent vector $0\neq v\in T_p \sss^2$ so that
\begin{align}
Q(v,v)=0.
\end{align}
By Lemma \ref{l_isolated_singularities}, in a neighbourhood of an isolated zero $z=0$ for $Q$ the function $P$ can be written as $P(z) = z^k f(z)$ with $f(0) \neq 0$. To study the behavior of the null directions near the origin, we use polar coordinates $z = r e^{i\theta}$ and represent the tangent vector $v$ by its phase $\phi$, i.e., $v = e^{i\phi}$. Asymptotically as $r \to 0$, the equation for the null directions $Q(v,v)=0$ is equivalent to studying
\begin{align}
\text{Re}\left( r^k e^{i k \theta} f(0) e^{2i\phi} \right) = 0.
\end{align}
Up to a rotation in the complex plane, we can assume without loss of generality that $f(0) \in \mathbb{R}^+$. So the equation reduces to $\cos(k\theta + 2\phi) = 0$, which yields the following relation for the phase:
\begin{align}
k\theta + 2\phi(\theta) = \frac{\pi}{2} + m\pi, \quad m \in \mathbb{Z}.
\end{align}
Solving for $\phi(\theta)$, we obtain:
\begin{align}
\phi(\theta) = -\frac{k}{2}\theta + \frac{\pi}{4} + \frac{m\pi}{2}.
\end{align}
The total variation $\Delta_{\Gamma} \phi$ of the phase $\phi(\theta)$ along a closed loop $\Gamma$ around the origin ($0 \leq \theta \leq 2\pi$) is given by:
\begin{align}
\Delta_{\Gamma} \phi = \phi(2\pi) - \phi(0) = -\frac{k}{2}(2\pi) = -k\pi.
\end{align}
Hence, the index $\mathfrak{I}_0(v)$ of the null direction field around the isolated zero is:
\begin{align}
\mathfrak{I}_0(v) = \frac{\Delta_{\Gamma} \phi}{2\pi} = -\frac{k}{2}.
\end{align}
\end{remark}

\begin{lemma}\label{l_ClaimE}
If $Q\equiv 0$, then $u\in \widetilde{\mathcal{C}}$.
\end{lemma}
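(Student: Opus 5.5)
The plan is to follow the analogue of \cite[Claim E]{EM19} separately in each phase, and then to use the transmission conditions to tie the two radial pieces together. Suppose $Q\equiv 0$. On $\Omega^+$ this says $\nabla^2 u^+(x)=\nabla^2 v^+_x(x)$ for every $x$. By construction $v^+_x$ also agrees with $u^+$ to first order at $x$: $v^+_x(x)=u^+(x)$ and $\nabla v^+_x(x)=\nabla u^+(x)$. So $u^+$ and $v^+_x$ have the same $2$-jet at every point. As in \cite{EM19}, the parameters $(q^+,R^+)$ of the candidate are determined smoothly by the $1$-jet $(x,\nabla u^+(x),u^+(x))$. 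I would then differentiate the map $x\mapsto (q^+(x),R^+(x))$ along a curve. The derivative involves only the difference of Hessians $\nabla^2u^+-\nabla^2 v^+_x$ at $x$, which is zero, so the map is locally constant. Since $\Omega^+$ is connected, $q^+$ and $R^+$ are constant there, and $u^+=v^+$ on $\Omega^+$ for one fixed radial solution centred at $q^+$. Running the same argument on $\Omega^-$ with the candidates $v^-$ gives $u^-=v^-$, radial about a fixed $q^-$ with radius $R^-$. To see that $\Omega^-$ is connected, I would note that $\Omega$ is simply connected in $\sss^2$, so its complement is connected. The continuity of $Q$ from Lemma \ref{l_Q_continuity} and continuity of the jets up to $\partial\Omega$ let these identities extend to the closures.

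Next I would identify the domains. Since $u^+=v^+$ is radial about $q^+$ and vanishes on $\partial\Omega$, $\partial\Omega$ lies in a level set of $r^+=d^{\sss^2}(\cdot,q^+)$, i.e.\ in a geodesic circle, and it is the zero set $\partial B_{R^+}(q^+)$. The function $u^+$ is a positive solution of $-\Delta u^+=1$ in $\Omega^+$ vanishing on the boundary. Together with connectedness, this should give $\Omega^+=B_{R^+}(q^+)$. I would verify this carefully, since $v^+$ is a priori defined only on $B_{R^+}(q^+)$ and must be analytically continued; I would argue via unique continuation for the ODE in $r^+$. Similarly, $\Omega^-=B_{R^-}(q^-)$ with common boundary $\partial\Omega$. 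Two geodesic balls with the same boundary circle that partition $\sss^2$ must have antipodal centres and satisfy $R^-=\pi-R^+$.

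Finally, I would check the area condition $|B_{R^+}|=\gamma|B_{R^-}|$. This follows directly from the definition $\gamma=|\Omega^+|/|\Omega^-|$ once $\Omega^\pm$ are these balls; alternatively, it can be obtained by integrating \eqref{Eq-main} over $\sss^2$. Hence $u=(v^+,v^-)\in\widetilde{\mathcal C}$.

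The main obstacle is the first step: showing rigorously that $Q\equiv0$ forces the candidate parameters to be locally constant, especially near points where $\nabla u^\pm=0$, such as the maximum of $u^+$. There the parametrization of $\mathcal C$ may degenerate. I would first appeal to \cite[Claim E]{EM19} directly, phase by phase. If needed, I would restrict to the open dense set where $\nabla u^\pm\neq0$ and extend by continuity, using that the constant parameters must agree across that set by connectedness.
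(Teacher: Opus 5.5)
Your proposal is correct and takes the same route as the paper, whose proof simply invokes \cite[Claim E]{EM19} in each phase. Your extra steps make explicit what that citation leaves implicit: connectedness of $\Omega^-$ (from simple connectedness and $C^2$ regularity), identification of $\Omega^\pm$ as complementary balls with antipodal centres and $R^-=\pi-R^+$, and the area condition from the definition of $\gamma$.

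The degeneracy you worry about at critical points does not occur here. For the torsion candidates, $\nabla v^+_{q,R}(x)$ equals $\tan(d^{\sss^2}(x,q)/2)$ times the unit vector at $x$ pointing towards $q$. This depends only on $q$ and is a diffeomorphism of $\sss^2\setminus\{-x\}$ onto $T_x\sss^2$, while $R$ enters $v^+$ only through the additive constant $-\ln(1+\cos R)$. So your Jacobian argument works everywhere, including at the maximum of $u^+$, and likewise for $v^-$, which is $-\gamma$ times a torsion candidate.
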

\begin{proof}
The proof follows as in \cite[Claim E]{EM19}.
\end{proof}

We are ready to prove Theorem \ref{t_main}.

\begin{proof}[Proof of Theorem \ref{t_main}]
We want to prove that $Q\equiv 0$ and hence, by Lemma \ref{l_ClaimE}, $u\in \widetilde{\mathcal{C}}$. Thanks to Proposition \ref{p_zeroesQ}, we only have to prove that $Q$ cannot have isolated zeroes. Indeed, assume by contradiction that $Q$ has isolated zeroes, and hence (Proposition \ref{p_zeroesQ}) that the null directions of $Q$ determines two continuous line fields with isolated singularities of negative index. Since the Euler characteristic of $\sss^2$ is 2, by the Poincar\'e--Hopf Index Theorem we get a contradiction.
\end{proof}


\section{Proofs of Theorems \ref{t_general} and \ref{t_eigenvalue}} \label{Proofs of Theorems 1.4 and 1.5}

In the present section we construct the classes of candidate solutions associated to  problems \eqref{Eq-general} and \eqref{Eq-eigenvalue}. This implies that the bilinear form $Q$, given by \eqref{Eq-Q-def}, is well defined also in these settings. In the meantime, we will also show that these candidate solutions are radially symmetric with respect to a fixed point: thanks to Remark \ref{Rmk-Continuity-Q}, this allows us to prove a counterpart of Lemma \ref{l_Q_continuity} and hence to obtain a globally defined continuous symmetric bilinear form. Once  this is done, the remaining part of Section \ref{Sec-t-main} can be retraced verbatim (notice that the explicit equation is never used), and hence we can obtain the proofs of Theorem \ref{t_general} and Theorem \ref{t_eigenvalue}.

The construction of the candidate solutions is very similar to that performed in \cite[Section 3]{EM19}. However, in our case, we have to take into account the fact that our problems \eqref{Eq-general} and \eqref{Eq-eigenvalue} can be decoupled into  two problems with the compatibility condition as  problem \eqref{Eq-main} is decoupled into problem \eqref{Eq-decoupled problems for W}  with \eqref{Eq-compatibility}.

\subsection{Candidate solutions for (\ref{Eq-general})}
We start by defining the following family of candidate solutions
\begin{align}
\mathcal{C}:=\{V:=(v^+,v^-)=(v^+_{p_+,w_+,a_+}, v^-_{p_-,w_-,a_-})\}_{(p_\pm,w_\pm,a_\pm)\in T\sss^2\times \rr^\pm}
\end{align}
solving respectively
\begin{align}
\begin{cases}
-\Delta v^+=f(v^+) & \inn B_{R^+}(q^+),\\
-\Delta v^-=-\gamma f\left(-\frac {v^-}\gamma\right) & \inn B_{R^-}(q^-),\\
v^\pm=0 & \onn \partial B_{R^\pm}(q^\pm),\\
p_\pm\in \overline{B_{R^\pm}(q^\pm)},\\
v^\pm(p_\pm)=a_\pm,\\
\nabla v^\pm(p_\pm)=w_\pm
\end{cases}
\end{align}
for some $R^\pm=R^\pm(p_\pm,w_\pm,a_\pm)>0$ and $q^\pm=q^\pm (p_\pm,w_\pm,a_\pm)\in \sss^2$. To simplify notation, the subscript $(p_\pm, w_\pm, a_\pm)$ is suppressed unless necessary for clarity. The existence of such functions is proved in \cite[Section 3.2]{EM19}.

Consider the family $\{W_r\}_r$ of solutions to the following ODE problem
\begin{align}\label{Eq-W_r}
\begin{cases}
W_r''(s)+\cot(s)W_r'(s)+f(W_r(s))=0 & \inn (0,r),\\ \ W_r(r)=0,\\ \ W_r\in C([0,r]).
\end{cases}
\end{align}

\begin{remark}\label{Rmk_gamma_W'}
Notice that since
\begin{align}
W'_r(s)=-\frac{1}{\sin(s)}\int_0^s \sin(z)\ f(W_r (z))\ \dint{z},
\end{align}
then, for every $r>0$,
\begin{align}
\lim_{s\to 0^+} W'_r(s) = 0
\end{align}
and, denoted $l_r:=\lim_{s\to r^-} W'_r(s)<0$,
\begin{align}
\lim_{r\to 0^+} l_r=0 \quad \andd \quad \lim_{r\to \pi^-} l_r =-\infty.
\end{align}
Hence, there always exists $R>0$ so that $\gamma=\gamma(R):=\frac{W_R'(R)}{W_{\pi-R}'(\pi-R)}$.
\end{remark}

Similarly to the torsion problem, given $R>0$ so that $\gamma=\frac{W_R'(R)}{W_{\pi-R}'(\pi-R)}$, we denote
\begin{align}
\widetilde{\mathcal{C}}=\Big\{V\in \mathcal{C}\ :\ R^+=R,\ R^-=\pi-R\ \textnormal{and}\ q^\pm\ \textnormal{are antipodal} \Big\}.
\end{align}
Fixed $o\in \sss^2$, we define
\begin{align}
v^+(x):= W_R(d^{\sss^2}(o,x)) \quad \andd \quad v^-(x):=-\gamma W_{\pi-R}(\pi-d^{\sss^2}(o,x))
\end{align}
and hence the function
\begin{align}
v:=v^+ \chi_{B_R(o)}+v^- \chi_{\sss^2\setminus \overline{B_R(o)}}
\end{align}
is a radial solution to
\begin{align}\label{Eq_v_gamma_BR}
\begin{cases}
-\Delta v=f(v)\chi_{B_R(o)}-\gamma f\left(-\frac{v}{\gamma}\right)\chi_{\sss^2\setminus \overline{B_R(o)}} & \inn \sss^2,\\ v=0 & \onn \partial B_R(o).
\end{cases}
\end{align}
Observe that by definition of $\gamma$
\begin{align}
\nabla v^+=\nabla v^- \quad \onn \partial B_R(o).
\end{align}
Hence, thanks to Remark \ref{Rmk_gamma_W'}, for any $x\in \partial \Omega$ and for every $w\in T_x \sss^2$ there always exists $r>0$ so that $x\in \partial B_r(o)$, for a certain $o\in \sss^2$, and so that the solution $v$ to \eqref{Eq_v_gamma_BR} satisfies
\begin{align}
\nabla v^+(x)=w.
\end{align}

\subsection{Candidate solutions for (\ref{Eq-eigenvalue})}

We start by defining the following family of candidate solutions
\begin{align}
\mathcal{C}:=\{V:=(v^+,v^-)=(v^+_{p_+,w_+,a_+}, v^-_{p_-,w_-,a_-})\}_{(p_\pm,w_\pm,a_\pm)\in T\sss^2\times \rr^\pm}
\end{align}
where, denoted $\lambda^+:=\lambda(\Omega^+)$ and $\lambda^-:=\lambda(\Omega^-)$, the functions $v^\pm$ satisfy
\begin{align}
\begin{cases}
-\Delta v^\pm=\lambda^\pm v^\pm & \inn B_{R^\pm}(q^\pm),\\
v^\pm=0 & \onn \partial B_{R^\pm}(q^\pm),\\
p_\pm\in \overline{B_{R^\pm}(q^\pm)},\\
v^\pm(p_\pm)=a_\pm,\\
\nabla v^\pm(p_\pm)=w_\pm
\end{cases}
\end{align}
for some $q^\pm=q^\pm(p_\pm,w_\pm,a_\pm)\in \sss^2$, where $R^\pm>0$ are such that the first Dirichlet eigenvalues of $B_{R^\pm}$ equal $\lambda^\pm$ respectively. In particular, $\pm v^\pm\geq 0$ are the first Dirichlet eigenfunctions of some geodesic balls $B_{R^\pm}(q^\pm)$ respectively. The construction of such a family can be pursued as in \cite[Section 3.1]{EM19}. To simplify notation, the subscript $(p_\pm, w_\pm, a_\pm)$ is suppressed unless necessary for clarity.

In this case, the classes $\mathcal{C}$ and $\widetilde{\mathcal{C}}$ coincide
\begin{align}
\widetilde{\mathcal{C}}:=\mathcal{C}.
\end{align}
Retracing what has been done in Section \ref{Sec-t-main}, what we get is that $u\in \mathcal{C}$ and hence
\begin{align}
\Omega^+=B_{R^+}(q^+) \quad \andd \quad \Omega^-=B_{R^-}(q^-).
\end{align}
Since $\Omega^+$ and $\Omega^-$ are complementary sets, we have $R^-=\pi-R^-$ and $q^+$ and $q^-$ are antipodal.


\bigskip

\noindent\textbf{Acknowledgements.} The first author has been supported by ``Centro di Ricerca Matematica Ennio De Giorgi'' and he is a member of GNAMPA-INdAM. The second author has been supported by JSPS KAKENHI Grant Numbers JP22K03381 and JP26K06856. This research was started while the first author visited Tohoku University; he wishes to thank its kind hospitality. 

\bigskip

\appendix

\section{Stochastic completeness for the two-phase heat equation}
Let $(M,g)$ be a Riemannian manifold with associated volume form $\textnormal{dv}$. Let $(\mathcal{E},\mathcal{F})$ be a Dirichlet form on $L^2(M,\textnormal{dv})$ with associated 
\begin{itemize}
\item positive semidefinite self-adjoint operator $L$ on $L^2(M,\textnormal{dv})$;
\item heat semigroup $\{T_t\}_t$.
\end{itemize}
We recall the following equivalent definitions of stochastic completeness for Dirichlet forms (see \cite{FOT94, MR12, St94})

\begin{definition} The Dirichlet form $(\mathcal{E},\mathcal{F})$ is \textit{stochastically complete} if one of the following equivalent conditions is satisfied
\begin{enumerate}
\item $T_t 1\equiv 1$ for some (equiv. all) $t>0$;
\item the unique bounded solution to
\begin{align}
\begin{cases}
v_t=-L(v) & \inn M\times (0,+\infty)\\
v=0 & \onn M\times \{0\}
\end{cases}
\end{align}
is the constant null function.
\end{enumerate}
\end{definition}

\begin{definition}The Riemannian manifold $(M,g)$ is said to be \textit{stochastically complete} if the Dirichlet form associated to the Laplace-Beltrami operator
\begin{align}
\mathcal{E}(u,w)=\int_{M} g(\nabla u, \nabla w) \dvol \quad \quad \textnormal{with domain $\mathcal{F}=H^1(M,\textnormal{dv})$}
\end{align}
is stochastically complete.
\end{definition}

In what follows, let $(M,g)$ be a stochastically complete Riemannian manifold of dimension $n\geq 2$. Fixed $\Omega \subset M$ of class $C^2$ and $\sigma:=\sigma^+ \chi_{\Omega}+\sigma^- \chi_{M\setminus \overline{\Omega}}$, where $0<\sigma^- <\sigma^+$ are constants, the present section is aimed to prove the uniqueness to the the following Cauchy problem:
\begin{align}
\begin{cases}
v_t=\textnormal{div}(\sigma \nabla v) & \inn M \times (0,+\infty) \\ v=v_0 & \onn M \times \{0\}\\
v\in L^\infty (M\times (0,+\infty))
\end{cases}
\end{align}
for every $v_0\in L^\infty(M)$.

In particular, denoted by
\begin{align}
\mathcal{E}_\sigma (u,w):=\int_{M} \sigma g(\nabla u, \nabla w) \dvol
\end{align}
the Dirichlet form associated to the operator $-\textnormal{div}(\sigma\nabla \cdot)$, whose domain is
\begin{align}
\mathcal{F}:=H^1(M,\textnormal{dv}),
\end{align}
we are going to prove that under a suitable control on the volume growth of geodesic balls, which is a classical sufficient condition for the stochastic completeness of the manifold, also the stochastic completeness of the form $(\mathcal{E}_\sigma,\mathcal{F})$ holds.
\medskip

Following \cite{St94}, we denote the \textit{intrinsic metric} associated to $\mathcal{E}_\sigma$ and $\mathcal{E}$ respectively by
\begin{align}
\rho_\sigma :M\times M \to [0,+\infty) \quad \andd \quad \rho:M\times M\to [0,+\infty)
\end{align}
where
\begin{align}
\rho_\sigma(x,y):=\sup\left\{u(x)-u(y)\ :\ u\in H^1_{loc}(M,\textnormal{dv})\cap C(M),\ \sigma |\nabla u|^2 \leq 1\right\}
\end{align}
and
\begin{align}
\rho (x,y):=\sup\left\{u(x)-u(y)\ :\ u\in H^1_{loc}(M,\textnormal{dv})\cap C(M),\ |\nabla u|^2 \leq 1\right\}.
\end{align}

\begin{lemma}\label{lem_equivalent_metrics}
The metrics $\rho_\sigma$ and $\rho$ are equivalent
\begin{align}
\frac{1}{\sqrt{\sigma^+}}\rho\leq \rho_\sigma \leq \frac{1}{\sqrt{\sigma^-}} \rho.
\end{align}
Moreover, $\rho$ coincides with the Riemannian distance function $d^g$.
\end{lemma}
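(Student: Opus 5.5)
The plan is to compare admissible test functions for the two suprema directly, and then to identify $\rho$ with $d^g$ by the standard smooth-approximation argument.

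\emph{Comparison.} Take $u\in H^1_{loc}(M,\textnormal{dv})\cap C(M)$ with $|\nabla u|^2\le 1$ a.e. Since $\sigma\le\sigma^+$, the rescaled function $\tilde u:=u/\sqrt{\sigma^+}$ satisfies
\begin{align}
\sigma|\nabla\tilde u|^2\le \frac{\sigma}{\sigma^+}\le 1 .
\end{align}
So $\tilde u$ is admissible for $\rho_\sigma$, and $\rho_\sigma(x,y)\ge \tilde u(x)-\tilde u(y)=(u(x)-u(y))/\sqrt{\sigma^+}$. Taking the supremum over $u$ gives $\rho\le\sqrt{\sigma^+}\,\rho_\sigma$.

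Conversely, take $u$ admissible for $\rho_\sigma$. Since $\sigma\ge\sigma^-$ a.e., we get $|\nabla u|^2\le 1/\sigma^-$. Then $\sqrt{\sigma^-}\,u$ is admissible for $\rho$, which yields $\rho_\sigma\le\rho/\sqrt{\sigma^-}$.

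The only subtlety is that $\sigma$ is defined only almost everywhere (on $\partial\Omega$ it is ambiguous). The constraints are read a.e., and $\partial\Omega$ has measure zero because it is a $C^2$ hypersurface, so this causes no trouble.

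\emph{Identification $\rho=d^g$.} For the inequality $\rho\ge d^g$, I test with $u=d^g(\cdot,y)$. This function is continuous and $1$-Lipschitz, so by Rademacher's theorem it lies in $H^1_{loc}$ with $|\nabla u|\le 1$ a.e. It gives $\rho(x,y)\ge d^g(x,y)$.

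For $\rho\le d^g$, I need every admissible $u$ to be $1$-Lipschitz with respect to $d^g$. I would work in a chart, where $u\in W^{1,\infty}_{loc}$, and mollify to get smooth $u_\e\to u$ locally uniformly with $|\nabla u_\e|_g\le 1+o(1)$; the $o(1)$ absorbs the variation of the metric coefficients over the mollification scale. Then for any piecewise $C^1$ curve $c$ from $y$ to $x$,
\begin{align}
u_\e(x)-u_\e(y)=\int \nabla u_\e\cdot \dot c \le (1+o(1))\,L(c).
\end{align}
To handle a curve that crosses several charts, I would cover it by finitely many charts and mollify chart-wise; alternatively, I would use the fact that a Sobolev function with bounded gradient is locally Lipschitz with respect to $d^g$ on small convex balls and then chain along the curve. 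Letting $\e\to0$ and taking the infimum over curves gives $u(x)-u(y)\le d^g(x,y)$.

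This last step, that bounded weak gradient implies Lipschitz with respect to the Riemannian distance, is the only non-formal point, and it is where I expect the main obstacle to lie. It is the classical fact that the intrinsic metric of the Riemannian Dirichlet form is the geodesic distance, and I would cite Sturm \cite{St94} for it rather than redo the local argument in full.
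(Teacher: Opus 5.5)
Your proof is correct, but it is organized differently from the paper's.

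\textbf{The paper's route.} The paper never compares $\rho_\sigma$ with $\rho$ directly; it sandwiches both against $d^g$.
\begin{itemize}
\item For the upper bounds it writes $u(x)-u(y)$ as the line integral of $g(\nabla u,\gamma')$ along a Lipschitz curve and bounds $|\nabla u|$. This gives $\rho\le d^g$ and $\rho_\sigma\le d^g/\sqrt{\sigma^-}$.
\item For the lower bounds it tests with $d^g(y,\cdot)$ and $d^g(y,\cdot)/\sqrt{\sigma^+}$, the same distance test function you use. This gives $\rho\ge d^g$ and $\rho_\sigma\ge d^g/\sqrt{\sigma^+}$.
\end{itemize}
Thus $\rho=d^g$, and the equivalence follows from the sandwich.

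\textbf{Your route.} You obtain the two-sided comparison by mapping the admissible classes into each other via scalar multiplication. This is purely formal and independent of the identification $\rho=d^g$. Analysis enters only in the single inequality $\rho\le d^g$.

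\textbf{What your version buys.} The paper applies the line-integral identity to an arbitrary $u\in H^1_{loc}\cap C(M)$. The a.e.-defined gradient of such a function has no meaning on a curve, which is a null set. So that step tacitly needs exactly the chart-wise mollification, or Lipschitz-on-small-balls, argument you sketch, or the citation to Sturm. You also handle the fact that $\sigma$, as defined, is not $\ge\sigma^-$ on $\partial\Omega$, by reading the constraints a.e. The paper leaves this implicit.

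\textbf{What the paper's version buys.} Its sandwich delivers directly the bounds $d^g/\sqrt{\sigma^+}\le\rho_\sigma\le d^g/\sqrt{\sigma^-}$, which are used immediately afterwards for the metric ball inclusions. In your version these follow at once by combining your equivalence with $\rho=d^g$.
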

\begin{proof}
We start by observing that fixed $x,y\in M$ and any Lipschitz-continuous curve $\gamma:[0,1]\to M$ joining $x$ and $y$, for any $u\in H^1_{loc}(M,\textnormal{dv})\cap C(M)$ such that $\sigma |\nabla u|^2 \leq 1$ it holds
\begin{align}
u(x)-u(y)&=-(u(\gamma(1))-u(\gamma(0)))\\
&=-\int_0^1 \gamma'[u](\gamma(s))\ \dint{s}\\
&=-\int_0^1 g(\nabla u, \gamma')\ \dint{s}\\
&\leq \int_0^1|\nabla u| |\gamma'|\ \dint{s}\\
&\leq \frac{1}{\sqrt{\sigma^-}} \int_0^1 |\gamma'|\ \dint{s}
\end{align}
and hence
\begin{align}
u(x)-u(y)\leq \frac{1}{\sqrt{\sigma^-}} L[\gamma]\leq \frac{1}{\sqrt{\sigma^-}}d^g(x,y) 
\end{align}
where $L[\gamma]$ is the length of the curve $\gamma$. The same computation shows that for any $w\in H^1_{loc}(M,\textnormal{dv})\cap C(M)$ such that $|\nabla w|^2 \leq 1$
\begin{align}
w(x)-w(y)\leq d^g(x,y).
\end{align}
Hence
\begin{align}
\rho_\sigma(x,y)\leq \frac{1}{\sqrt{\sigma^-}}d^g(x,y) 
\end{align}
and
\begin{align}
\rho(x,y)\leq d^g(x,y).
\end{align}

Conversely, define
\begin{align}
u_y:=\frac{1}{\sqrt{\sigma^+}} d^g(y,\cdot) \quad \andd \quad w_y:=d^g(y,\cdot).
\end{align}
We have that $u_y\in H^1_{loc}(M,\textnormal{dv})\cap C(M)$ is such that $\sigma |\nabla u_y|^2 \leq 1$ and $w_y\in H^1_{loc}(M,\textnormal{dv})\cap C(M)$ is such that $|\nabla w|^2 \leq 1$. Hence
\begin{align}
\frac{1}{\sqrt{\sigma^+}} d^g(x,y)=u_y(x)-u_y(y)\leq \rho_\sigma(x,y)
\end{align}
and
\begin{align}
d^g(x,y)=u_y(x)-u_y(y)\leq \rho(x,y).
\end{align}

By the above computations it follows that
\begin{align}
\frac{1}{\sqrt{\sigma^+}} d^g(x,y)\leq \rho_\sigma(x,y) \leq \frac{1}{\sqrt{\sigma^-}} d^g(x,y)
\end{align}
and
\begin{align}
\rho (x,y)=d^g(x,y).
\end{align}
\end{proof}

As observed in \cite[Page 188, point (c)]{St94}, a first consequence of Lemma \ref{lem_equivalent_metrics} is that $(M, d^g)=(M,\rho)$ and $(M,\rho_\sigma)$ generate the same topology and the metric balls $B_R^\sigma(z):=\{x\in M\ :\ \rho_\sigma(z,x)<R\}$ associated to the metric $\rho_\sigma$ are relatively compact in $M$, since for every $z\in M$ and $R>0$
\begin{align}
B_{\sqrt{\sigma^-}R}(z)\subseteq B_{R}^\sigma (z)\subseteq B_{\sqrt{\sigma^+}R}(z).
\end{align}
In particular, it follows that, for any fixed $x\in M$, the functions
\begin{align}
V^\sigma(r):=|B_r^\sigma(x)|_{g} \quad \andd \quad V(r):=|B_r(x)|_{g}
\end{align}
satisfy, for every $R>0$,
\begin{align}\label{Eq_comparison_volumes_Dirichlet_forms}
V(\sqrt{\sigma^-} R)\leq V^\sigma(R)\leq V(\sqrt{\sigma^+}R).
\end{align}

\begin{theorem}\label{thm_equivalent_stochastic_completeness}
The following conditions are equivalent
\begin{itemize}
	\item $\int_1^{+\infty} \frac{r}{\log(V(r))}\ \dint{r}=+\infty$;
	\item $\int_1^{+\infty} \frac{r}{\log(V^\sigma(r))}\ \dint{r}=+\infty$.
\end{itemize}
Moreover, if one of (and hence all) the above conditions holds, then both $(\mathcal{E},\mathcal{F})$ and $(\mathcal{E}_\sigma,\mathcal{F})$ are stochastically complete.
\end{theorem}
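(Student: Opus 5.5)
The plan is to reduce both statements to Sturm's volume-growth criterion \cite{St94} for strongly local regular Dirichlet forms, using the comparison of intrinsic metrics in Lemma \ref{lem_equivalent_metrics}. The argument has three steps.

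\textbf{Step 1: equivalence of the two integral conditions.} By \eqref{Eq_comparison_volumes_Dirichlet_forms} we have $V(\sqrt{\sigma^-}r)\le V^\sigma(r)\le V(\sqrt{\sigma^+}r)$. Since $\log$ is increasing, for $r$ large (where $V^\sigma(r)>1$) this gives
\begin{align}
\frac{r}{\log V(\sqrt{\sigma^+}r)}\le \frac{r}{\log V^\sigma(r)}\le \frac{r}{\log V(\sqrt{\sigma^-}r)} .
\end{align}
The substitution $s=\sqrt{\sigma^\pm}\,r$ turns $\int^{\infty} \frac{r\,\dint{r}}{\log V(\sqrt{\sigma^\pm}r)}$ into $\frac{1}{\sigma^\pm}\int^{\infty}\frac{s\,\dint{s}}{\log V(s)}$. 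Hence both integrals diverge or converge together. The lower limit $1$ is harmless: over any compact range the integrands are bounded once $V,V^\sigma>1$. The finitely many $r$ with $V(r)\le e$ can be handled by shifting the lower limit, which does not affect divergence. This is routine bookkeeping.

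\textbf{Step 2: check the hypotheses of Sturm's criterion.} Sturm's theorem \cite[Theorem 4]{St94} states that a strongly local regular Dirichlet form is stochastically complete when its intrinsic balls are relatively compact and $\int^\infty \frac{r}{\log V^{\rho}(r)}\,\dint{r}=+\infty$. We therefore verify the following for $(\mathcal{E}_\sigma,\mathcal{F})$:
\begin{itemize}
\item It is a Dirichlet form, since the Markov property holds because $\sigma$ is bounded, positive and measurable.
\item It is closed and regular. The form is comparable to $\mathcal{E}$, via $\sigma^-\mathcal{E}\le\mathcal{E}_\sigma\le\sigma^+\mathcal{E}$, so it has the same domain $H^1$ and $C_c^\infty$ is a core.
\item It is strongly local, since it is given by an energy density.
\item Its intrinsic metric $\rho_\sigma$ induces the original topology, and its balls are relatively compact. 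Both facts are recorded right after Lemma \ref{lem_equivalent_metrics}, using completeness of $(M,g)$, which is implicit in the stochastic completeness setting via Hopf--Rinow.
\end{itemize}
Then the second integral condition yields stochastic completeness of $(\mathcal{E}_\sigma,\mathcal{F})$. Applying the same theorem to $\mathcal{E}$, whose intrinsic metric is $d^g$ by Lemma \ref{lem_equivalent_metrics}, gives stochastic completeness of $(\mathcal{E},\mathcal{F})$; this is the classical Grigor'yan criterion.

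\textbf{Step 3: the main obstacle.} The delicate point is matching exactly the hypotheses of \cite{St94}. Sturm formulates the criterion with $\rho_\sigma$ required to be a genuine metric, with continuous, finite and non-degenerate values. That requirement is what Lemma \ref{lem_equivalent_metrics} supplies through the two-sided bound by $d^g$. Sturm's condition also concerns the volume measured with respect to the reference measure $\dvol$, which is unchanged here, since $\sigma$ enters only the energy and not the measure. I would state this explicitly to avoid confusion with weighted measures.
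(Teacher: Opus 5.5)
Your proposal follows the paper's proof: the inclusions behind \eqref{Eq_comparison_volumes_Dirichlet_forms} give the equivalence of the two integral conditions (your substitution $s=\sqrt{\sigma^\pm}\,r$ makes this explicit), and \cite[Theorem 4]{St94} then gives stochastic completeness of both forms; your hypothesis check spells out what the paper leaves implicit. One correction: geodesic completeness of $(M,g)$ does not follow from stochastic completeness (e.g.\ $\mathbb{R}^n\setminus\{0\}$ with $n\ge 2$ is stochastically complete but incomplete), so it must be an extra standing assumption---tacit in the paper too, and true for $\mathbb{H}^n$---and it is what gives relative compactness of the balls via Hopf--Rinow and makes $C_c^\infty$ a core of $H^1$.
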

\begin{proof}
By \eqref{Eq_comparison_volumes_Dirichlet_forms} it follows that
\begin{align}
\int_1^{+\infty} \frac{r}{\log(V(r))}\ \dint{r}=+\infty \quad \Leftrightarrow \quad \int_1^{+\infty} \frac{r}{\log(V^\sigma(r))}\ \dint{r}=+\infty.
\end{align}
Hence, if one of the above conditions holds, then both
\begin{align}
\int_1^{+\infty} \frac{r}{\log(V(r))}\ \dint{r}=+\infty \quad \andd \quad \int_1^{+\infty} \frac{r}{\log(V^\sigma(r))}\ \dint{r}=+\infty.
\end{align}
By \cite[Theorem 4]{St94}, both $(\mathcal{E},\mathcal{F})$ and $(\mathcal{E}_\sigma,\mathcal{F})$ are stochastically complete.
\end{proof}

To conclude, we obtain the following

\begin{corollary}\label{Cor_uniqueness_H^n}
Let $\Omega\subset \mathbb{H}^n$ be a $C^{2,\alpha}$ domain, for some $0<\alpha<1$. Set $\sigma:=\sigma^+ \chi_{\Omega}+\sigma^- \chi_{\mathbb{H}^n\setminus \overline{\Omega}}$, where $\sigma^+\neq \sigma^-$ are positive constants. Fixed any $v_0\in L^\infty(\mathbb{H}^n,\textnormal{dv})$, the Cauchy problem
\begin{align}
\begin{cases}
v_t=\textnormal{div}(\sigma \nabla v) & in\ \mathbb{H}^n\times (0,+\infty)\\
v=v_0 & on\ \mathbb{H}^n\times \{0\}
\end{cases}
\end{align}
admits at most one bounded solution.
\end{corollary}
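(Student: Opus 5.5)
The plan is to derive Corollary \ref{Cor_uniqueness_H^n} from Theorem \ref{thm_equivalent_stochastic_completeness} applied to $M=\mathbb{H}^n$. By linearity it is enough to show that the only bounded solution with $v_0\equiv 0$ is the zero function. Indeed, if $v_1,v_2$ are two bounded solutions with the same datum $v_0$, then $v_1-v_2$ is bounded and solves the problem with zero datum. This zero-datum statement is precisely condition (2) in the definition of stochastic completeness for the Dirichlet form $(\mathcal{E}_\sigma,\mathcal{F})$. So the task reduces to checking the volume-growth hypothesis in Theorem \ref{thm_equivalent_stochastic_completeness}.

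Two remarks on the setting come first.
\begin{itemize}
\item The appendix assumed $\sigma^-<\sigma^+$, but Lemma \ref{lem_equivalent_metrics} and the comparison \eqref{Eq_comparison_volumes_Dirichlet_forms} are symmetric once $\sigma^\pm$ are replaced by $\min\sigma$ and $\max\sigma$. Hence $\sigma^+\neq\sigma^-$ is harmless.
\item $\mathbb{H}^n$ is complete, so its geodesic balls are relatively compact. This is the standing framework of \cite{St94}, in which Sturm's Theorem 4 is stated.
\end{itemize}

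Next comes the volume estimate. For a fixed point $x$,
\[
V(r)=|B_r(x)| = \omega_{n-1}\int_0^r \sinh^{n-1}(s)\,\dint{s}\le C e^{(n-1)r}.
\]
This gives $\log V(r)\le C' r$ for $r\ge 1$, so
\[
\int_1^{+\infty}\frac{r}{\log V(r)}\,\dint{r}\ \ge\ \int_1^{+\infty}\frac{1}{C'}\,\dint{r}=+\infty.
\]
One should check that $\log V(r)>0$ for large $r$; if it is not positive on all of $[1,\infty)$, start the integral at some larger $r_0$, which does not affect divergence. Theorem \ref{thm_equivalent_stochastic_completeness} then yields stochastic completeness of $(\mathcal{E}_\sigma,\mathcal{F})$, and uniqueness follows.

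I expect the main obstacle to be the identification step: showing that a bounded solution of the two-phase Cauchy problem, in the weak or transmission sense used in the introduction, is a solution of $v_t=-Lv$ in the sense to which Sturm's uniqueness characterization applies. Here $L$ is the self-adjoint operator of $\mathcal{E}_\sigma$. The $C^{2,\alpha}$ regularity of $\partial\Omega$ enters at this point. The solution is locally $H^1$ in space, and the transmission conditions \eqref{transmission condition for heat} say exactly that $v$ is a local weak solution of $\partial_t v=\mathrm{div}(\sigma\nabla v)$ across $\partial\Omega$. I would invoke the equivalence in \cite{St94} (see also \cite{FOT94}) between stochastic completeness and the uniqueness of bounded local weak solutions. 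This would let me conclude without constructing the semigroup representation by hand.
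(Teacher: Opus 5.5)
Your proposal is correct and follows the paper's route. The paper also applies Theorem \ref{thm_equivalent_stochastic_completeness} to $M=\mathbb{H}^n$, uses $V(r)\sim C_1e^{(n-1)r}$ to bound $r/\log V(r)$ below by a positive constant (its display writes $r/V(r)$, a typo for $r/\log V(r)$), and reads uniqueness off condition (2) of the definition of stochastic completeness; your upper bound $V(r)\le Ce^{(n-1)r}$, the linearity reduction, and your remarks on the ordering of $\sigma^\pm$ and on identifying bounded transmission solutions with solutions of $v_t=-Lv$ supply details the paper leaves implicit.
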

\begin{proof}
In the particular case of $(M,g)=(\mathbb{H}^n,g^{\mathbb{H}^n})$, we have that
\begin{align}
V(r)\sim C_1 e^{(n-1)r} \quad \textnormal{as } r\to +\infty
\end{align}
for a positive constant $C_1$ depending on $n$. Hence
\begin{align}
\frac{r}{V(r)}\sim C_2\frac{r}{(n-1)r} =: C_3>0 \quad \textnormal{as } r\to +\infty,
\end{align}
where $C_3$ is constant, implying that
\begin{align}
\int_1^{+\infty} \frac{r}{V(r)}\ \dint{r}=+\infty.
\end{align}
By Theorem \ref{thm_equivalent_stochastic_completeness}, the claim follows.
\end{proof}

\bibliographystyle{alpha}
\bibliography{references}

@article{Al56,
  title={Uniqueness theorem for surfaces in the large V},
  author={Alexandrov, A. D.},
  journal={Vestnik Leningrad Univ.},
  volume={11},
  pages={5--17},
  year={1956}
}

@article{BBF98,
  title={Green's function, harmonic transplantation, and best {Sobolev} constant in spaces of constant curvature},
  author={Bandle, C. and Brillard, A. and Flucher, M.},
  journal={Transactions of the American Mathematical Society},
  volume={350},
  number={3},
  pages={1103--1128},
  year={1998},
  publisher={American Mathematical Society}
}

@article{BS25,
  title={Non-isoparametric {Serrin} domains of $\mathbb{S}^3$ with connected toric boundary},
  author={Bisterzo, A. and Sakaguchi, S.},
  journal={arXiv preprint arXiv:2511.16531},
  year={2025}
}

@article{CV17,
  author={Ciraolo, G. and Vezzoni, L.},
  title={A rigidity problem on the round sphere},
  journal={Commun. Contemp. Math.},
  volume={19},
  number={5},
  pages={1750001, 11 pp.},
  year={2017},
  doi={10.1142/S0219199717500018}
}

@article{CV19,
  author={Ciraolo, G. and Vezzoni, L.},
  title={On {Serrin's} overdetermined problem in space forms},
  journal={Manuscripta Math.},
  volume={159},
  number={3--4},
  pages={445--452},
  year={2019},
  doi={10.1007/s00229-018-1079-z}
}

@article{DNS25,
  title={Symmetry results for some overdetermined obstacle problems},
  author={De Nitti, N. and Sakaguchi, S.},
  journal={Proceedings of the American Mathematical Society},
  volume={153},
  number={7},
  pages={2919--2932},
  year={2025},
  publisher={American Mathematical Society}
}

@article{EM19,
  author={Espinar, J. M. and Mazet, L.},
  title={Characterization of {$f$}-extremal disks},
  journal={J. Differential Equations},
  fjournal={Journal of Differential Equations},
  volume={266},
  year={2019},
  number={4},
  pages={2052--2077},
  issn={0022-0396,1090-2732},
  mrclass={35N25 (35B50 35J25 35J91 35N10 58J05)},
  mrnumber={3906240},
  mrreviewer={Steven\ George\ Krantz},
  doi={10.1016/j.jde.2018.08.020},
  url={https://doi.org/10.1016/j.jde.2018.08.020}
}

@article{FM15,
  author={Fall, M. M. and Minlend, I. A.},
  title={Serrin's overdetermined problems on {Riemannian} manifolds},
  journal={Adv. Calc. Var.},
  volume={8},
  number={4},
  pages={371--400},
  year={2015}
}

@article{FMW,
  title={Serrin's overdetermined problem on the sphere},
  author={Fall, M. M. and Minlend, I. A. and Weth, T.},
  journal={Calculus of Variations and Partial Differential Equations},
  volume={57},
  pages={1--24},
  year={2018},
  publisher={Springer}
}

@book{FOT94,
  title={Dirichlet forms and symmetric {Markov} processes},
  author={Fukushima, M. and Oshima, Y. and Takeda, M.},
  volume={19},
  year={1994},
  publisher={W. de Gruyter Berlin}
}

@article{GM20,
  title={Uniqueness of immersed spheres in three-manifolds},
  author={G{\'a}lvez, J. A. and Mira, P.},
  journal={Journal of Differential Geometry},
  volume={116},
  number={3},
  pages={459--480},
  year={2020},
  publisher={Lehigh University}
}

@article{GNN79,
  author={Gidas, B. and Ni, Wei Ming and Nirenberg, L.},
  journal={Communications in Mathematical Physics},
  number={3},
  pages={209--243},
  title={Symmetry and related properties via the maximum principle},
  volume={68},
  year={1979}
}

@book{GT83,
  title={Elliptic partial differential equations of second order},
  author={Gilbarg, David and Trudinger, Neil S},
  year={1983},
  publisher={Springer}
}

@book{Jo91,
  title={Two-dimensional geometric variational problems},
  author={Jost, J.},
  year={1991},
  publisher={John Wiley and Sons, Ltd.}
}

@article{KP98,
  title={Serrin's result for hyperbolic space and sphere},
  author={Kumaresan, S. and Prajapat, J.},
  journal={Duke Mathematical Journal},
  volume={91},
  number={1},
  pages={17--28},
  year={1998}
}

@article{KS23,
  title={A symmetry theorem in two-phase heat conductors, in a special issue entitled ``{When} analysis meets geometry -- on the 50th birthday of {Serrin's} problem"},
  author={Kang, H. and Sakaguchi, S.},
  journal={Mathematics in Engineering},
  volume={5},
  number={3},
  pages={1--7},
  year={2023}
}

@article{M91,
  title={Symmetry and overdetermined boundary value problems},
  author={Molzon, R.},
  journal={Forum Mathematicum},
  volume={3},
  number={2},
  pages={143--156},
  year={1991},
  publisher={De Gruyter}
}

@book{MR12,
  title={Introduction to the theory of (non-symmetric) {Dirichlet} forms},
  author={Ma, Z. M. and R{\"o}ckner, M.},
  year={2012},
  publisher={Springer Science \& Business Media}
}

@article{NT18,
  author={Nitsch, C. and Trombetti, C.},
  title={The classical overdetermined {Serrin} problem},
  journal={Complex Variables and Elliptic Equations},
  volume={63},
  number={7--8},
  pages={1107--1122},
  year={2018}
}

@article{QX17,
  author={Qiu, G. and Xia, C.},
  title={Overdetermined boundary value problems in {$\mathbb S^n$}},
  journal={J. Math. Study},
  volume={50},
  number={2},
  pages={165--173},
  year={2017}
}

@article{R97,
  author={Reichel, W.},
  journal={Archive for Rational Mechanics and Analysis},
  number={4},
  pages={381--394},
  title={Radial symmetry for elliptic boundary-value problems on exterior domains},
  volume={137},
  year={1997}
}

@article{Se71,
  title={A symmetry problem in potential theory},
  author={Serrin, J.},
  journal={Archive for Rational Mechanics and Analysis},
  volume={43},
  pages={304--318},
  year={1971},
  publisher={Springer}
}

@article{St94,
  author={Sturm, K. T.},
  title={Analysis on local {D}irichlet spaces. {I}. {R}ecurrence, conservativeness and {$L^p$}-{L}iouville properties},
  journal={J. Reine Angew. Math.},
  fjournal={Journal f{\"u}r die Reine und Angewandte Mathematik. [Crelle's Journal]},
  volume={456},
  year={1994},
  pages={173--196},
  issn={0075-4102,1435-5345},
  mrclass={31C25 (31B05 60J45)},
  mrnumber={1301456},
  mrreviewer={Zhen-Qing\ Chen},
  doi={10.1515/crll.1994.456.173},
  url={https://doi.org/10.1515/crll.1994.456.173}
}

@article{We71,
  author={Weinberger, H. F.},
  title={Remark on the preceding paper of {Serrin}},
  journal={Archive for Rational Mechanics and Analysis},
  volume={43},
  number={4},
  pages={319--320},
  year={1971}
}

\end{document}